\newtheorem{thm}{Theorem}[section]
\newtheorem{prop}[thm]{Proposition}
\newtheorem{cor}[thm]{Corollary}
\newtheorem{fact}[thm]{Fact}
\newtheorem{lem}[thm]{Lemma}
\begin{document}

\title{SUBGROUPS OF MOD(S) GENERATED BY $X \in \{(T_aT_b)^k,(T_bT_a)^k\}$ AND $Y \in \{T_a,T_b\}$}
\author{JAMIL MORTADA}
\date{\today}
\maketitle

\begin{abstract}
Suppose $a$ and $b$ are distinct isotopy classes of essential simple closed curves in an orientable surface $S$. Let $T_a$ and $T_b$ represent the respective Dehn twists along $a$ and $b$. In this paper, we study the subgroups of Mod(S) generated by $X$ and $Y$, where $X \in \{(T_aT_b)^k,(T_bT_a)^k\}$, $k \in \mathbb{Z}$, and $Y \in \{T_a,T_b\}$. For a large class of examples, we show that the subgroups $\langle X,Y \rangle$ and $\langle T_a,T_b \rangle$ are isomorphic. Moreover, we prove that $\langle X,Y \rangle = \langle T_a,T_b \rangle$ whenever $i(a,b) = 1$ and $k$ is not a multiple of three or $i(a,b) \geq 2$ and $k = \pm 1$. Further, we compute the index $[\langle T_a,T_b \rangle : \langle X,Y \rangle]$ when $\langle X,Y \rangle$ is a proper subgroup of $\langle T_a,T_b \rangle$.
\end{abstract}

\section{Introduction}
 \label{Introduction}
 
Let $S = S_{g,b}$ be a surface of genus $g$ and $b$ boundary components. Throughout this paper, we assume that $S$ is connected, orientable, compact, and of finite type. Denote by Mod(S) the mapping class group of $S$, which is the group of isotopy classes of orientation preserving homeomorphisms of $S$ which fix the boundary $\partial S$ pointwise. 
 \medskip

Let $a$ and $b$ represent distinct isotopy classes of essential simple closed curves in $S$, and let $T_a$ and $T_b$ be the respective Dehn twists along $a$ and $b$. In this paper, we investigate the subgroups $\langle X,Y \rangle$ of Mod(S), where $X \in \{(T_aT_b)^k,(T_bT_a)^k\}$, $k \in \mathbb{Z}$, and $Y \in \{T_a,T_b\}$. We compute $\langle X,Y \rangle$ based on $k$ and the geometric intersection numbers $i(a,b)$. In particular, we show that $\langle X,Y \rangle$ is isomorphic to one of the following groups: $\mathbb{Z}$, $\mathbb{Z}^2$, $\mathcal{B}_3$, $SL_2(\mathbb{Z})$, or $\mathbb{F}_2$. It turns out that $\langle X,Y \rangle \cong \langle T_a,T_b \rangle$ whenever $i(a,b) \neq 1$ and $k \neq 0$. Moreover, the two groups coincide whenever $i(a,b) = 1$ and $k$ is not a multiple of three or $i(a,b) \geq 2$ and $k = \pm 1$. In the first case, the group generated by $T_a$ and $T_b$ is isomorphic to $SL_2(\mathbb{Z})$ if $S$ is the torus $S_{1,0}$ and is isomorphic to the braid group $\mathcal{B}_3$ on three strands when $S \neq S_{1,0}$. In the second case, the group generated by $T_a$ and $T_b$ is isomorphic to the free group on two generators.
 \medskip
   
Consider two distinct isotopy classes $a$ and $b$ of essential simple closed curves in $S$. Denote by $T_a$ and $T_b$ the respective (left) Dehn twists along $a$ and $b$. Let $X \in \{(T_aT_b)^k,(T_bT_a)^k\}$, $k \in \mathbb{Z}$, and $Y \in \{T_a,T_b\}$. Denote by $G$ the subgroup of Mod(S) generated by $X$ and $Y$. The structure of $G$ is independent of the isotopy classes $a$ and $b$. Rather, $G$ depends only on $k$ and the geometric intersection $i(a,b)$. Since $i(a,b)$ is symmetric in $a$ and $b$, it follows that the subgroups $\langle (T_aT_b)^k,T_a \rangle$ and $\langle (T_bT_a)^k,T_b \rangle$ are isomorphic. Similarly, $\langle (T_aT_b)^k,T_b \rangle$ and $\langle (T_bT_a)^k,T_a \rangle$ are isomorphic. Thus, the structure of $G$ is symmetric with respect to $T_a$ and $T_b$, and it is enough to study $G$ modulo this symmetry. In other words, it suffices to consider $X \in \{(T_aT_b)^k,(T_bT_a)^k\}$ and fix $Y = T_a$ in order to investigate the group $G$. We prove the following theorem:

\begin{thm}[Main Theorem]\label{main theorem}
Suppose that $a$ and $b$ are distinct isotopy classes of essential simple closed curves in $S$. Let $T_a$ and $T_b$ denote the (left) Dehn twists along $a$ and $b$ respectively. Let $X \in \{(T_aT_b)^k,(T_bT_a)^k\}$, $k \in \mathbb{Z}$, and $Y = T_a$. Denote by $G$ the subgroups of Mod(S) generated by $X$ and $Y$.
\smallskip
\begin{list}{\labelitemi}{\leftmargin=1em}
\item If $k = 0$, then $G = \langle T_a \rangle \cong \mathbb{Z}$
\item If $k \neq 0$ and $i(a,b) = 0$, then $G = \langle T_a,T_b^k \rangle \cong \mathbb{Z}^2$. Moreover, $G$ has index $k$ in $\langle T_a,T_b \rangle$.
\item If $k \neq 0$ and $i(a,b) \geq 2$, then $G \cong \mathbb{F}_2$. Moreover, $G = \langle T_a,T_b \rangle$ when $k = \pm 1$ and $G$ is a subgroup of infinite index in $\langle T_a,T_b \rangle$ otherwise.
\item If $k \neq 0$ and $i(a,b) = 1$, then 

When $S = S_{1,0}$, 
\[
 G =
  \left\{
   \begin{array}{lll}
    \langle T_a,T_b \rangle \cong SL_2(\mathbb{Z}) & \mbox{if $k \not \equiv 0 \hspace{0.1cm} mod(3)$} \\
    \mathbb{Z}_2 \times \mathbb{Z} & \mbox{if $k \equiv 3 \hspace{0.1cm} mod(6)$}\\
    \langle T_a \rangle \cong \mathbb{Z} & \mbox{if $k \equiv 0 \hspace{0.1cm} mod(6)$}
   \end{array}
  \right.
\]
In the last two cases, $G$ has infinite index in $\langle T_a,T_b \rangle$.
\medskip

When $S \neq S_{1,0}$, 
\[
 G =
  \left\{
   \begin{array}{ll}
    \langle T_a,T_b \rangle \cong \mathcal{B}_3 & \mbox{if $k \not \equiv 0 \hspace{0.1cm} mod(3)$} \\
    \mathbb{Z}^2 & \mbox{if $k \equiv 0 \hspace{0.1cm} mod(3)$}
   \end{array}
  \right.
\]
In the second case, $G$ has infinite index in $\langle T_a,T_b \rangle$.
\end{list}
\end{thm}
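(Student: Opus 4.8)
The plan is to argue case by case on the pair $(k, i(a,b))$, in each case taking as given the structure of the ambient group $H := \langle T_a, T_b \rangle$ established earlier (namely $H \cong \mathbb{Z}^2$ when $i(a,b)=0$, $H \cong \mathbb{F}_2$ when $i(a,b)\ge 2$, and $H \cong \mathcal{B}_3$ or $SL_2(\mathbb{Z})$ when $i(a,b)=1$), and then locating $G$ inside $H$. Writing $c = T_aT_b$, it suffices by the symmetry reduction already noted to treat $X = c^k$ and $Y = T_a$: the choice $X = (T_bT_a)^k$ gives a subgroup conjugate to this one by $T_a$ (since $T_bT_a = T_a^{-1}cT_a$), hence isomorphic and of equal index. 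The case $k = 0$ is immediate, since then $X$ is trivial and $G = \langle T_a \rangle \cong \mathbb{Z}$.

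For $i(a,b) = 0$ the twists commute, so $c^k = T_a^k T_b^k$ and therefore $G = \langle T_a, T_a^k T_b^k \rangle = \langle T_a, T_b^k \rangle$; inside $H \cong \mathbb{Z}^2$ this is the index-$k$ subgroup $\mathbb{Z} \times k\mathbb{Z}$, which is again free abelian of rank two. For $i(a,b) \ge 2$, where $H$ is free on $T_a, T_b$, I would first show that $T_a$ and $c^k$ do not commute (if they did they would be powers of a common element, forcing $T_b$ to be a power of $T_a$), so that $G$ is free of rank two by Nielsen--Schreier. To pin down the index, I pass to the abelianization $H^{ab} \cong \mathbb{Z}^2$, where the image of $G$ is generated by $(1,0)$ and $(k,k)$ and hence has index $|k|$; thus $G = H$ exactly when $k = \pm 1$ (indeed $\langle T_a, (T_aT_b)^{\pm 1} \rangle$ recovers $T_b$), while for $|k| \ge 2$ the subgroup $G$ is proper. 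A proper subgroup of $\mathbb{F}_2$ that is free of rank two must have infinite index, by the Nielsen--Schreier index--rank formula ($\mathrm{rank} = n+1$ for index $n$), giving the claim.

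The case $i(a,b)=1$ is the heart of the argument. The governing computation is that the braid relation $T_aT_bT_a = T_bT_aT_b$ yields $c\,T_a\,c^{-1} = T_b$, and more generally that conjugation by $c$ is an automorphism of order three because $z := c^3 = (T_aT_b)^3$ is central; consequently $c^k T_a c^{-k}$ depends only on $k \bmod 3$. When $3 \nmid k$ this conjugation (iterated at most twice) produces $T_b$ from $T_a$ inside $G$, so $G \supseteq \langle T_a, T_b \rangle = H$ and therefore $G = H$. When $3 \mid k$, instead $c^k = z^{k/3}$ is central, so $G = \langle T_a, z^{k/3} \rangle$ is abelian; in the braid case $S \neq S_{1,0}$ one has $z$ of infinite order generating $Z(\mathcal{B}_3)$ with $\mathcal{B}_3/\langle z \rangle \cong PSL_2(\mathbb{Z})$, and since the image of $T_a$ there has infinite order no nontrivial relation between $T_a$ and $z^{k/3}$ survives, giving $G \cong \mathbb{Z}^2$. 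Its infinite index follows since $\mathcal{B}_3$ is not virtually abelian---its central quotient $PSL_2(\mathbb{Z})$ contains a nonabelian free subgroup---so $\mathcal{B}_3$ has no finite-index subgroup isomorphic to $\mathbb{Z}^2$.

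For the torus $S = S_{1,0}$ the same conjugation computation applies, but now $z = (T_aT_b)^3 = -I$ has order two in $SL_2(\mathbb{Z})$ and $c$ has order six; thus $c^k$ depends on $k \bmod 6$. When $3 \nmid k$ the argument above again yields $G = SL_2(\mathbb{Z})$. When $k \equiv 0 \ (\mathrm{mod}\ 6)$ we have $c^k = I$, so $G = \langle T_a \rangle \cong \mathbb{Z}$; when $k \equiv 3 \ (\mathrm{mod}\ 6)$ we have $c^k = -I$, which does not lie in $\langle T_a \rangle$, so $G = \langle -I \rangle \times \langle T_a \rangle \cong \mathbb{Z}_2 \times \mathbb{Z}$; both have infinite index. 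I expect the main obstacle to be the $i(a,b)=1$ case with $3 \mid k$: identifying $G$ precisely and, above all, proving the index assertions requires using not just generator manipulations but the finer structure of $H$---its center, its abelianization, and the quotient $PSL_2(\mathbb{Z})$---and keeping straight the $\bmod\,3$ versus $\bmod\,6$ dichotomy that distinguishes $\mathcal{B}_3$ (where $z$ has infinite order) from $SL_2(\mathbb{Z})$ (where $z^2 = 1$).
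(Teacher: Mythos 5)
Your proposal is correct, and while its case decomposition necessarily matches the paper's, several of the key steps take a genuinely different route. You dispose of the $X=(T_bT_a)^k$ cases once and for all by observing $T_bT_a = T_a^{-1}(T_aT_b)T_a$, so that $\langle (T_bT_a)^k, T_a\rangle$ is a conjugate of $\langle (T_aT_b)^k, T_a\rangle$ by an element of $\langle T_a,T_b\rangle$; the paper instead carries four parallel rows through its conjugation tables (Lemma~\ref{lemma1}, Proposition~\ref{lemma2}). For $i(a,b)=1$ you obtain the mod-3 periodicity of $c^kT_ac^{-k}$ from the centrality of $c^3$, where the paper proves it by induction on $k$ from the braid relation. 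When $3\mid k$ and $S\neq S_{1,0}$ you identify $G\cong\mathbb{Z}^2$ by projecting to $\mathcal{B}_3/\langle c^3\rangle\cong PSL_2(\mathbb{Z})$ and using that $T_a$ survives with infinite order, whereas the paper argues topologically via the action on the curve $b$ and the intersection formula of Fact~\ref{f3}; likewise your infinite-index conclusions rest on non-virtual-abelianness deduced from free subgroups of $PSL_2(\mathbb{Z})$, while the paper gets the same fact from Ishida's theorem applied to twists about curves of intersection number $2$ (Corollaries~\ref{B3 is not va} and~\ref{modular group is not va}). On the torus you compute $(T_aT_b)^3=-I$ directly in $SL_2(\mathbb{Z})$ where the paper invokes the 2-chain relation (Theorem~\ref{chain relation}). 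The $i(a,b)\geq 2$ case is essentially identical in both treatments: Nielsen--Schreier forces a rank-two free proper subgroup of $\mathbb{F}_2$ to have infinite index, with properness certified by your abelianization count versus the paper's $\mathbb{Z}_k$ quotient of the normal closure. The trade-off is that your route leans on the known algebraic structure of $\mathcal{B}_3$ and $SL_2(\mathbb{Z})$ (center, central quotient, free subgroups) and is correspondingly shorter, while the paper stays inside mapping-class-group technology (intersection numbers, Ishida, chain relations), keeping the argument self-contained given its background section. Two small points to tighten: your parenthetical in the $i(a,b)\geq 2$ case (``forcing $T_b$ to be a power of $T_a$'') is better phrased as an abelianization or reduced-word contradiction, and in the torus case you should state explicitly that $SL_2(\mathbb{Z})$ is not virtually abelian (by the same $PSL_2(\mathbb{Z})$ argument) to justify the claim that both exceptional subgroups have infinite index.
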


\noindent \textbf{Acknowledgements.} I am very grateful to Sergio Fenley for carefully reading this paper and suggesting improvements. 

\section{Background on Dehn Twists}
 \label{Background on Dehn Twists}
 
This section provides a basic background about Dehn twists and some of their relevant properties. For more information, the reader is referred to [\ref{FM}], [\ref{Iv}], and [\ref{B}]. 
 \medskip
 
Let $\alpha$ be a simple closed curve in $S$ and let $N = N_{\epsilon}(\alpha)$ denote a regular neighborhood of $\alpha$. A left Dehn twist (with respect to the orientation of $S$) along $\alpha$ is a homeomorphism $T_{\alpha} : S \rightarrow S$ which is supported on $N$ and is the identity on the complement of $N$. If $\beta$ is an arc transverse to $\alpha$, then $T_{\alpha}$ affects $\beta$ by causing it to turn left near the intersection point, go once around $\alpha$, then proceed along $\beta$ as before. See Figure~\ref{Dehn twist} for an illustration. 
 \medskip
 
\begin{figure}[h]
	\centering
		\includegraphics[width=0.80\textwidth]{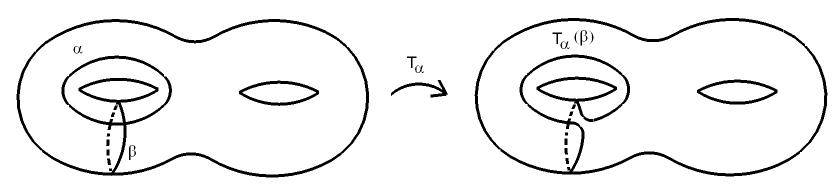}
	\caption{{\small The effect of the Dehn twist $T_{\alpha}$ on the simple closed curve $\beta$.}}
	\label{Dehn twist}
\end{figure}
 
Let $a$ represent the isotopy class of $\alpha$. The isotopy class (or mapping class) of the homeomorphism $T_{\alpha}$ is an element of Mod(S). We shall denote this element by $T_a$ and refer to it as the (left) Dehn twist along $a$. In what follows, a Dehn twist will always mean an element of Mod(S). 
 \medskip
 
If $a$ and $b$ are isotopy classes of simple closed curves in $S$, then the geometric intersection number $i(a,b)$ is the minimal number of intersection points between the representatives of $a$ and $b$. That is, 
\[
   i(a,b) = min_{\stackrel{\alpha \in a}{\beta \in b}}|\alpha \cap \beta|
\]

Let $a$ and $b$ represent isotopy classes of simple closed curves in $S$, and denote by $T_a$ and $T_b$ their respective (left) Dehn twists in Mod(S).

\begin{fact} \label{f1}
$T_a = T_b \Leftrightarrow a = b$.
\end{fact}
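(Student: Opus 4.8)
The backward implication is immediate from the construction: the mapping class $T_a \in \mathrm{Mod}(S)$ depends only on the isotopy class $a$ of the curve $\alpha$ used to define the twist, so $a = b$ forces $T_a = T_b$. The content of the Fact is therefore the forward implication, and my plan is to recover the curve $a$ from the twist $T_a$ through its action on geometric intersection numbers. The key input is the standard estimate that for any isotopy class $c$ of essential simple closed curve and any nonzero integer $n$,
\[
 |n|\,i(a,c)^2 - 2\,i(a,c) \;\le\; i\!\left(T_a^{\,n}(c),\,c\right) \;\le\; |n|\,i(a,c)^2 + 2\,i(a,c),
\]
so that dividing by $|n|$ and letting $n \to \infty$ yields $i(a,c)^2 = \lim_{n\to\infty} i\!\left(T_a^{\,n}(c),c\right)/|n|$. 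If $T_a = T_b$, then $T_a^{\,n} = T_b^{\,n}$ acts identically on every isotopy class $c$; comparing the two limits gives $i(a,c)^2 = i(b,c)^2$, and since intersection numbers are nonnegative, $i(a,c) = i(b,c)$ for every $c$.

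It then remains to deduce $a = b$ from the equality $i(a,c) = i(b,c)$ for all $c$. Taking $c = b$ first gives $i(a,b) = i(b,b) = 0$, so $a$ and $b$ can be realized disjointly. To finish, I would invoke the principle that an isotopy class of essential simple closed curve is determined by its collection of geometric intersection numbers: if $a$ and $b$ were distinct but disjoint, the change of coordinates principle produces a curve $c$ meeting exactly one of them, so that $i(a,c) \neq i(b,c)$, contradicting what we just established. Hence $a = b$.

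I expect this last step to be the main obstacle, since producing a distinguishing curve $c$ in general requires the surface to contain enough curves and must account for whether $a$ is separating or not. For the torus $S_{1,0}$ the argument is cleanest and can be made self-contained: isotopy classes of essential simple closed curves correspond to primitive classes $\pm(p,q) \in H_1(S_{1,0}) \cong \mathbb{Z}^2$, with $i(a,c) = |p_a q_c - q_a p_c|$, so the equality $i(a,c) = i(b,c)$ for all $c$ forces $(p_a,q_a) = \pm(p_b,q_b)$ and hence $a = b$. For surfaces of higher complexity I would either cite the injectivity of the intersection-number map (the embedding of the set of isotopy classes into the space of measured laminations) or carry out the change-of-coordinates construction of $c$ explicitly, treating the non-separating and separating cases for $a$ separately.
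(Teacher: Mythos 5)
Your argument is correct in outline, but note first that the paper itself offers no proof of this fact: it is stated as background in Section 2, with the reader referred to Farb--Margalit, Ivanov, and Birman, so there is no internal argument to compare against. Measured against the standard proof (the one in Farb--Margalit, which is presumably what the paper has in mind), your route recovers the same idea --- extract intersection data of $a$ from the action of $T_a$ --- but takes two detours. First, the limiting argument is unnecessary: the paper's own Fact~\ref{f3} gives the exact equality $i(T_a^n(c),c) = |n|\,i(a,c)^2$, so a single application with $n=1$ already yields $i(a,c)^2 = i(b,c)^2$ for every $c$; the two-sided estimate and the limit $n \to \infty$ buy nothing extra. Second, and more substantively, you do not need the full injectivity statement ``$i(a,\cdot) = i(b,\cdot)$ implies $a = b$,'' which, as you yourself note, is the hardest ingredient. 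The standard argument uses the distinguishing curve contrapositively: assume $a \neq b$ and produce a curve $c$ with $i(a,c) = 0$ and $i(b,c) \neq 0$ (if $i(a,b) \neq 0$ take $c = a$; if $i(a,b) = 0$ apply change of coordinates to the disjoint non-isotopic pair). Then $T_a(c) = c$, while $i(T_b(c),c) = i(b,c)^2 > 0$ by Fact~\ref{f3}, so $T_b(c) \neq c$ and hence $T_a \neq T_b$. This is exactly the same change-of-coordinates input you flag as the main obstacle, but used in this direction it closes the proof in one line, with no need for the measured-lamination embedding, the case analysis over all test curves, or the separate torus computation (which is itself fine). So: no genuine gap provided the change-of-coordinates step is cited or carried out, but be aware you are proving something strictly stronger (curves are determined by their intersection functions) than what the Fact requires.
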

 \medskip

\begin{fact} \label{f2}
$T_a$ has infinite order.
\end{fact}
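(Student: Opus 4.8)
The plan is to detect that $T_a^n$ is nontrivial for every $n \neq 0$ by examining its action on a suitably chosen isotopy class of simple closed curves, measured through the geometric intersection number $i(\,\cdot\,,\cdot\,)$. I would deliberately avoid homological arguments: a Dehn twist along a separating curve acts trivially on $H_1(S)$, so the abelianized action cannot detect infinite order in general. Geometric intersection numbers, by contrast, grow linearly under iteration of a twist and provide a uniform obstruction that works for every essential $a$.

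First, since $a$ is essential, I would produce a simple closed curve $b$ with $i(a,b) \geq 1$; such a transverse curve exists because an essential curve neither bounds a disk nor is isotopic into the boundary. The central input is the standard inequality for iterated Dehn twists: for any isotopy classes $b$ and $c$ of simple closed curves and any $n \in \mathbb{Z}$,
\[
i\bigl(T_a^n(b),\, c\bigr) \;\geq\; |n|\, i(a,b)\, i(a,c) \;-\; i(b,c),
\]
which I would cite from [\ref{FM}], or prove directly via the bigon criterion: put $a$ and $b$ in minimal position, observe that $T_a^n(b)$ winds $n$ times around $a$ at each of the $i(a,b)$ intersection points, and verify that removing bigons with $c$ can reduce the resulting crossing count by at most $i(b,c)$.

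Taking $c = b$ and using $i(b,b) = 0$, the inequality specializes to $i(T_a^n(b), b) \geq |n|\, i(a,b)^2$. Because $i(a,b) \geq 1$, for every $n \neq 0$ we obtain $i(T_a^n(b), b) \geq |n| \geq 1 > 0 = i(b,b)$. Hence $T_a^n(b) \neq b$ as isotopy classes, so $T_a^n$ is not the identity mapping class; as this holds for all $n \neq 0$, the twist $T_a$ has infinite order. The main obstacle in this scheme is establishing the intersection inequality rigorously — the bigon criterion together with the bookkeeping of crossings created by twisting — whereas the existence of $b$ and the final deduction are routine. In the degenerate case where $S$ is an annulus and no transverse closed curve is available, I would run the identical argument with a transverse arc in place of $b$, tracking the isotopy classes of arcs under $T_a^n$.
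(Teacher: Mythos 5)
Your proposal is correct and follows the standard argument: the paper states this fact without proof (as background cited to [\ref{FM}]), but its own Fact~\ref{f3}, the equality $i(T_a^n(b),b) = |n|\,i(a,b)^2$, is precisely the tool your inequality specializes to, and combined with the existence of a curve $b$ meeting the essential curve $a$ it gives $T_a^n(b) \neq b$, hence $T_a^n \neq 1$, for all $n \neq 0$. The only streamlining available is to invoke Fact~\ref{f3} directly (or cite it from [\ref{FM}]) rather than establishing the more general inequality $i\bigl(T_a^n(b),c\bigr) \geq |n|\, i(a,b)\, i(a,c) - i(b,c)$; everything else, including the existence of $b$ and the annulus caveat, is sound.
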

 \medskip
 
\begin{fact} \label{f4}
If $f \in Mod(S)$, then $fT_af^{-1} = T_{f(a)}$.
\end{fact}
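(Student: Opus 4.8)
The plan is to verify the identity first at the level of homeomorphisms and then descend to mapping classes, the essential point being that a Dehn twist is determined by (i) the annular neighborhood on which it is supported and (ii) the direction of twisting, both of which are transported correctly by an orientation-preserving homeomorphism. First I would fix an orientation-preserving homeomorphism $\phi$ representing $f$ and a representative simple closed curve $\alpha$ of the class $a$, together with a regular neighborhood $N = N_{\epsilon}(\alpha)$ on which the twist map $T_{\alpha}$ defining $T_a$ is supported. Since $T_{\alpha}$ is the identity on $S \setminus N$, its conjugate $\phi T_{\alpha} \phi^{-1}$ is the identity on $\phi(S \setminus N) = S \setminus \phi(N)$; and because $\phi$ is a homeomorphism, $\phi(N)$ is again an annulus, namely a regular neighborhood of the simple closed curve $\phi(\alpha)$. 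Thus $\phi T_{\alpha} \phi^{-1}$ is supported on an annular neighborhood of $\phi(\alpha)$.

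It remains to identify the homeomorphism $\phi T_{\alpha} \phi^{-1}$ on this annulus as precisely a left Dehn twist about $\phi(\alpha)$. I would do this using the annulus model: in standard coordinates on $N \cong S^1 \times [0,1]$ the twist $T_{\alpha}$ is (isotopic to) the map $(\theta, t) \mapsto (\theta + 2\pi t, t)$, and transporting these coordinates to $\phi(N)$ via $\phi$ yields coordinates in which $\phi T_{\alpha} \phi^{-1}$ has exactly the same normal form. Hence on $\phi(N)$ it is a Dehn twist about $\phi(\alpha)$. The one point requiring care — and the step I expect to be the main obstacle — is checking that this twist is a \emph{left} twist rather than a right one. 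A right twist is the inverse of the model map, so the distinction amounts to the sign of the angular shift, which is exactly the datum that flips under an orientation-reversing change of coordinates. This is where the hypothesis $f \in \mathrm{Mod}(S)$, i.e. that $\phi$ is orientation-preserving, is used: since $\phi$ preserves the orientation of $S$ and hence the induced orientation on the annulus, it carries the left-twisting direction on $N$ to the left-twisting direction on $\phi(N)$. (Had $\phi$ reversed orientation, one would instead obtain $T_{\phi(\alpha)}^{-1}$.)

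Finally I would pass to isotopy classes. The homeomorphism $\phi T_{\alpha} \phi^{-1}$ represents the mapping class $f T_a f^{-1}$, and we have shown it is a left Dehn twist along $\phi(\alpha)$; since the isotopy class of $\phi(\alpha)$ is by definition $f(a)$, this mapping class is $T_{f(a)}$. Therefore $f T_a f^{-1} = T_{f(a)}$ in $\mathrm{Mod}(S)$. I would also note that the equality is independent of the chosen representatives $\phi$ and $\alpha$: isotopic curves have isotopic twists and conjugation is well defined on $\mathrm{Mod}(S)$, so the identity descends unambiguously to mapping classes.
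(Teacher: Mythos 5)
Your proof is correct and complete. The paper offers no proof of Fact~\ref{f4} at all --- it is stated as standard background, deferred to the references [\ref{FM}], [\ref{Iv}], [\ref{B}] --- and your argument (conjugating a representative twist homeomorphism, showing the support is carried to an annular neighborhood of $\phi(\alpha)$, identifying the conjugate in transported coordinates as a twist, and invoking orientation-preservation to conclude it is a \emph{left} twist rather than $T_{f(a)}^{-1}$) is precisely the canonical change-of-coordinates proof given in those references, with the one genuinely delicate point, the left-versus-right distinction, correctly isolated and handled.
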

 \medskip
 
The following fact follows easily from Facts ~\ref{f1} and ~\ref{f4}.
 
\begin{fact}\label{f}
Let $f \in Mod(S)$. Then $f T_a = T_a f \Leftrightarrow f(a) = a$.
\end{fact}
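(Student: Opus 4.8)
The plan is to convert the commutation relation into a conjugation relation and then read off the answer from the two cited facts. Starting from the equation $fT_a = T_a f$, I would multiply on the right by $f^{-1}$ to obtain the equivalent statement $fT_a f^{-1} = T_a$. Since $f$ is invertible in $\mathrm{Mod}(S)$, this manipulation is reversible, so $fT_a = T_a f$ holds if and only if $fT_a f^{-1} = T_a$.

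Next I would invoke Fact~\ref{f4}, which identifies the conjugate $fT_a f^{-1}$ as the Dehn twist $T_{f(a)}$ along the image class $f(a)$. Substituting this, the condition $fT_a f^{-1} = T_a$ becomes $T_{f(a)} = T_a$, again an exact equivalence rather than a one-way implication. Finally I would apply Fact~\ref{f1}, which says that two Dehn twists agree precisely when their defining isotopy classes agree; this turns $T_{f(a)} = T_a$ into $f(a) = a$. Chaining these three biconditionals yields $fT_a = T_a f \Leftrightarrow f(a) = a$, proving both directions at once.

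Because the argument is a short chain of equivalences, there is no serious obstacle to overcome; the only point requiring a little care is to confirm that every step is genuinely an ``if and only if.'' The first step is reversible because $f$ is a unit, and the second and third steps are equivalences by the very statements of Facts~\ref{f4} and~\ref{f1}. One should also keep in mind that $f(a)$ denotes the isotopy class obtained by applying (a representative of) $f$ to a representative of $a$, so that it is a well-defined element on which Fact~\ref{f1} may be applied; with that understood, the result follows immediately.
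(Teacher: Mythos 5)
Your proof is correct and follows exactly the route the paper intends: the paper gives no written proof but states that the fact ``follows easily from Facts~\ref{f1} and~\ref{f4},'' and your chain of equivalences $fT_a = T_af \Leftrightarrow fT_af^{-1} = T_a \Leftrightarrow T_{f(a)} = T_a \Leftrightarrow f(a) = a$ is precisely that argument. Nothing to add.
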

 
\begin{fact} \label{f3}
If $n$ is an integer, then $i(T_a^n(b),b) = |n|i(a,b)^2$.
\end{fact}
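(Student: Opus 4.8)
\section*{Proof proposal for Fact~\ref{f3}}

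The plan is to prove the formula by building an explicit efficient representative of $T_a^n(b)$ inside an annular neighborhood of $a$ and then certifying minimality via the bigon criterion. First I would dispose of the degenerate cases. If $n=0$ then $T_a^{n}=\mathrm{id}$ and both sides vanish, while if $i(a,b)=0$ then $a$ and $b$ admit disjoint representatives, so the support of $T_a$ misses a representative of $b$, whence $T_a(b)=b$ and again both sides are $0$. For the sign of $n$, I would use only that the mapping class group acts on isotopy classes preserving geometric intersection number: applying $T_a^{n}$ to both curves gives $i(T_a^{-n}(b),b)=i(b,T_a^{n}(b))=i(T_a^{n}(b),b)$, so the quantity is even in $n$ and I may assume $n>0$ and $m:=i(a,b)\ge 1$.

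Next I would fix representatives $\alpha\in a$ and $\beta\in b$ in minimal position, so that $|\alpha\cap\beta|=m$, and let $N\cong S^1\times[0,1]$ be a closed annular neighborhood of $\alpha$ with core $\alpha=S^1\times\{1/2\}$, chosen small enough that $\beta\cap N$ is a disjoint union of $m$ essential ``radial'' arcs $\{\theta_i\}\times[0,1]$. In the model coordinates $T_\alpha^{n}$ is the identity outside $N$ and acts by $(\theta,t)\mapsto(\theta+2\pi n t,\,t)$ on $N$, so each radial arc of $\beta$ is carried to an arc spiraling $n$ times around the annulus while the rest of $\beta$ is fixed. Every point of $T_\alpha^{n}(\beta)\cap\beta$ then lies in $N$, and a single spiraling image arc crosses the collection of $m$ radial arcs exactly once per revolution, hence $n$ times against each radial arc. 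Summing over the $m$ image arcs and the $m$ radial arcs gives the count $m\cdot m\cdot n = n\,m^2 = |n|\,i(a,b)^2$.

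It remains to show this representative is in minimal position with $\beta$, so that the count is not inflated by removable intersections; I expect this to be the main obstacle, the count itself being routine once the model is set up. By the bigon criterion it suffices to verify that $T_\alpha^{n}(\beta)$ and $\beta$ bound no bigon. No bigon can lie in the complement of $N$, where the two curves coincide. To exclude bigons meeting $N$, I would pass to the annular cover of $S$ associated to $\langle a\rangle\le\pi_1(S)$: there $\alpha$ lifts to the core and both $\beta$ and $T_\alpha^{n}(\beta)$ lift to arcs that are monotone across the annulus, and an innermost bigon would either project to a bigon between $\alpha$ and $\beta$, contradicting that $\alpha,\beta$ were taken in minimal position, or force one of the spiraling arcs to backtrack, which the explicit formula for $T_\alpha^{n}$ forbids. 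The delicate bookkeeping here is how the arcs enter and leave $N$ at $\partial N$; handling the boundary crossings carefully is what pins the per-pair count to exactly $n$ rather than $n\pm 1$. Since geometric intersection number is an isotopy invariant, the equality $i(T_a^{n}(b),b)=|n|\,i(a,b)^2$ follows.
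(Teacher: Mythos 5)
The paper never proves Fact~\ref{f3}: it is stated as background in Section~\ref{Background on Dehn Twists} and deferred to the cited references [\ref{FM}], [\ref{Iv}], [\ref{B}], so there is no internal argument to compare against. Your proposal is, in substance, the standard proof from those references (Proposition 3.2 and its companion inequality in Farb--Margalit), and its skeleton is sound: the degenerate cases $n=0$ and $i(a,b)=0$ are disposed of correctly, the reduction to $n>0$ via $i(T_a^{-n}(b),b)=i(b,T_a^{n}(b))$ uses naturality of $i$ legitimately, and the annulus model gives the right total. One counting correction to the bookkeeping you yourself flagged: a spiral arc meets each of the $m-1$ \emph{other} radial arcs exactly $n$ times, but meets its \emph{own} radial arc only $n-1$ times in the interior of $N$ (at $t = l/n$, $1 \le l \le n-1$); the missing crossing is recovered at exactly one of the two ends of $N$ after you push $T_\alpha^{n}(\beta)$ off $\beta$ in the complement of $N$, since the twist turns consistently to one side. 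This perturbation is not optional bookkeeping: the bigon criterion presupposes transversality, so the phrase ``no bigon can lie in the complement of $N$, where the two curves coincide'' only makes sense after the push-off.

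The minimality step, which you rightly identify as the crux, is where your sketch stops short of a proof. An innermost bigon between $T_\alpha^{n}(\beta)$ and $\beta$ does not directly ``project to a bigon between $\alpha$ and $\beta$''; what makes the annular-cover argument run is the standard lemma that minimal position is detected in the annular cover associated to $\langle a \rangle$ --- equivalently, since $\alpha$ and $\beta$ bound no bigons, every lift of $\beta$ that meets the core of the compactified cover crosses it exactly once. Granting that, lifts of $\beta$ and of $T_\alpha^{n}(\beta)$ are properly embedded arcs running from one boundary circle to the other, any two of which intersect minimally because their endpoint data differ by the deck twisting, and the bigon exclusion follows. So your proposal is a correct plan along the standard route, with the hard step named and localized but not executed; filling it requires exactly the lemma above, which is how the references the paper cites carry it out.
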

 \medskip

\begin{fact}\label{f5}
$T_aT_b = T_bT_a \Leftrightarrow i(a,b) = 0$. The left hand side of the equivalence is called the commutativity of disjointness relation. 
\end{fact}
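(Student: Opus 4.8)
The plan is to route both directions of the equivalence through Fact~\ref{f}, which converts the commutation relation into a statement about how $T_b$ acts on the isotopy class $a$, and then to supply the geometric content with Fact~\ref{f3}. Applying Fact~\ref{f} with $f = T_b$ gives at once
\[
T_aT_b = T_bT_a \;\Longleftrightarrow\; T_b(a) = a,
\]
so the entire claim reduces to the single equivalence $T_b(a) = a \Leftrightarrow i(a,b) = 0$, which I would prove in two short steps.

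For the implication $i(a,b) = 0 \Rightarrow T_b(a) = a$, I would use that $i(a,b)=0$ lets us choose representatives $\alpha \in a$ and $\beta \in b$ that are disjoint. Picking a regular neighborhood $N = N_{\epsilon}(\beta)$ small enough to miss $\alpha$, and recalling that $T_{\beta}$ is supported on $N$ and is the identity on its complement, we get that $T_{\beta}$ fixes $\alpha$ pointwise. Hence $T_b(a) = a$. This is just the disjoint-support observation.

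For the converse $T_b(a) = a \Rightarrow i(a,b) = 0$, I would invoke Fact~\ref{f3} with the roles of $a$ and $b$ interchanged and with $n = 1$, giving $i(T_b(a),a) = i(b,a)^2 = i(a,b)^2$ by symmetry of the geometric intersection number. If $T_b(a) = a$, the left-hand side equals $i(a,a) = 0$, so $i(a,b)^2 = 0$ and therefore $i(a,b) = 0$. The one point worth flagging is that $i(a,a) = 0$, which holds because an essential simple closed curve can be isotoped off a parallel copy of itself.

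\textbf{Main obstacle.} The substantive direction is ``commuting $\Rightarrow$ disjoint,'' and the only real content there is the clean application of Fact~\ref{f3}; once one observes that $T_b(a)=a$ makes $i(T_b(a),a)$ collapse to $i(a,a)=0$, the conclusion $i(a,b)=0$ is forced. Everything else is bookkeeping around Fact~\ref{f}.
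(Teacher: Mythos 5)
Your proof is correct. There is nothing in the paper to compare it against directly: Fact~\ref{f5} is stated in Section~\ref{Background on Dehn Twists} as background without proof, with the reader referred to [\ref{FM}], [\ref{Iv}], and [\ref{B}]. Your argument is a clean, self-contained derivation from the paper's other stated facts, and it is essentially the standard proof one finds in Farb--Margalit. The reduction via Fact~\ref{f} with $f = T_b$, giving $T_aT_b = T_bT_a \Leftrightarrow T_b(a) = a$, is exactly right, and there is no circularity: the paper derives Fact~\ref{f} from Facts~\ref{f1} and~\ref{f4}, and the usual proof of the intersection formula in Fact~\ref{f3} does not pass through the disjointness relation. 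The forward direction via disjoint representatives and the support of the twist matches the paper's own definition of $T_{\beta}$ as supported on $N_{\epsilon}(\beta)$ and the identity elsewhere. For the converse, the paper states Fact~\ref{f3} for every integer $n$, so your application with $n = 1$ (after swapping the roles of $a$ and $b$ and using symmetry of $i$) is legitimate as the facts are quoted, and you correctly flag the one small point that $i(a,a) = 0$ for a simple closed curve. (If one instead works from the source [\ref{FM}], where the formula is proved for nonzero $n$, your choice $n = 1$ is still covered, so nothing breaks.)
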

 \medskip

\begin{fact}\label{f6}
If $a$ and $b$ are distinct, then $T_aT_bT_a = T_bT_aT_b \Leftrightarrow i(a,b) = 1$. The left hand side of the equivalence is known as the braid relation.
\end{fact}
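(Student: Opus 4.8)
The plan is to reduce the stated equivalence to the single geometric identity $T_aT_b(a) = b$ and then treat each direction using the facts already recorded. First I would establish the algebraic reformulation
\[
T_aT_bT_a = T_bT_aT_b \iff T_aT_b(a) = b .
\]
Starting from $T_aT_bT_a = T_bT_aT_b$ and multiplying both sides on the right by $(T_aT_b)^{-1}=T_b^{-1}T_a^{-1}$ gives $(T_aT_b)T_a(T_aT_b)^{-1} = T_b$. By Fact~\ref{f4} the left-hand side equals $T_{(T_aT_b)(a)}$, so the braid relation is equivalent to $T_{(T_aT_b)(a)} = T_b$, which by Fact~\ref{f1} holds exactly when $(T_aT_b)(a) = b$. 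Every step is reversible, so it suffices to prove that, for distinct $a$ and $b$, one has $T_aT_b(a) = b$ if and only if $i(a,b) = 1$.

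For the forward implication I would assume $T_aT_b(a) = b$, equivalently $T_b(a) = T_a^{-1}(b)$, and play the two resulting intersection-number computations against each other. Applying Fact~\ref{f3} to the twist $T_b$ and the curve $a$ with exponent $1$ gives $i(T_b(a),a) = i(a,b)^2$. On the other hand, since $i$ is invariant under homeomorphisms and $T_a^{-1}(a) = a$, I get $i(T_b(a),a) = i(T_a^{-1}(b),a) = i(T_a^{-1}(b),T_a^{-1}(a)) = i(b,a) = i(a,b)$. Comparing the two yields $i(a,b)^2 = i(a,b)$, so $i(a,b)\in\{0,1\}$. The value $0$ is excluded: if $i(a,b)=0$ then both $T_b$ and $T_a$ fix $a$, so $T_aT_b(a) = a \neq b$, contradicting the hypothesis. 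Hence $i(a,b) = 1$.

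For the reverse implication I would invoke the change-of-coordinates principle. When $i(a,b)=1$ a regular neighborhood of $a\cup b$ is a one-holed torus $\Sigma$, both $T_a$ and $T_b$ are supported in $\Sigma$, and any two such configurations are related by a homeomorphism, so it suffices to verify $T_aT_b(a)=b$ on one standard model. On the torus, essential simple closed curves correspond bijectively to primitive classes of $H_1\cong\mathbb{Z}^2$ up to sign, so the identity can be checked on homology: taking $a,b$ as the standard basis, $T_a$ and $T_b$ act by $\left(\begin{smallmatrix}1&1\\0&1\end{smallmatrix}\right)$ and $\left(\begin{smallmatrix}1&0\\-1&1\end{smallmatrix}\right)$, and a direct computation gives $T_aT_b(a)= -b$, the class of $b$ up to sign. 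Transporting back by the coordinate homeomorphism (via Fact~\ref{f4}) gives $T_aT_b(a)=b$ in general, completing the proof. I expect this reverse direction to be the main obstacle: the reduction and the forward implication are essentially formal consequences of Facts~\ref{f1}, \ref{f3}, and \ref{f4}, whereas $i(a,b)=1 \Rightarrow T_aT_b(a)=b$ requires the genuine geometric input of recognizing the filling surface as a one-holed torus and using the classification of such curve pairs so that the model computation transfers to the general case.
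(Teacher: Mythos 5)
The paper never actually proves Fact~\ref{f6}: it is quoted as background (with the references [\ref{FM}], [\ref{Iv}], [\ref{B}] given at the start of Section~2), so there is no internal proof to compare against, and your proposal must be judged on its own. It is correct, and it is essentially the standard argument from the Primer [\ref{FM}]. Your opening reduction (braid relation $\iff T_aT_b(a)=b$, via Facts~\ref{f1} and~\ref{f4}) is exactly the derivation by which the paper deduces Fact~\ref{f7} from Fact~\ref{f6}, run backwards; since every step is an equivalence this is legitimate, and it cleanly isolates the geometry. The forward direction is airtight: Fact~\ref{f3} with the roles of $a$ and $b$ exchanged gives $i(T_b(a),a)=i(a,b)^2$, the rewriting $T_b(a)=T_a^{-1}(b)$ together with invariance of $i$ under mapping classes gives $i(T_b(a),a)=i(a,b)$, and the case $i(a,b)=0$ is killed because then $T_aT_b(a)=a\neq b$ --- note this is the only place the hypothesis that $a$ and $b$ are distinct enters, as it must, since $a=b$ satisfies the braid relation trivially but has $i(a,a)=0$. (Alternatively, Fact~\ref{f5} plus cancellation in the braid relation gives $T_a=T_b$, contradicting Fact~\ref{f1}.)

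The reverse direction is correct in substance but leans on three standard facts the paper does not record and which you should cite or prove: that a regular neighborhood $N$ of $\alpha\cup\beta$ is a one-holed torus when $i(a,b)=1$; that isotopy classes of essential nonperipheral simple closed curves in a (one-holed) torus are classified by primitive classes of $H_1\cong\mathbb{Z}^2$ up to sign; and the formula for the action of a twist on $H_1$. Two points deserve more care than you give them. First, the coordinate homeomorphism must be chosen orientation-preserving so that it conjugates left twists to left twists; otherwise the transported computation proves the identity for the inverse twists instead. (Your matrices are internally consistent --- they do satisfy the braid relation --- and the sign $T_aT_b(a)=-b$ on homology is harmless because curves are unoriented.) Second, the model computation establishes $T_aT_b(a)=b$ as an isotopy of curves inside $N$, and one should say explicitly that an isotopy in $N$ is in particular an isotopy in $S$; the phrase ``transporting back by the coordinate homeomorphism'' glosses over the fact that the model surface is not $S$ itself but an embedded subsurface in which both twists are supported. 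With those two sentences added, your proof is complete and self-contained relative to [\ref{FM}].
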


The following fact is an easy consequence of Facts~\ref{f1}, ~\ref{f4}, and ~\ref{f6}. 

\begin{fact}\label{f7}
If $i(a,b) = 1$, then $T_aT_b(a) = b$ and $T_bT_a(b) = a$. 
\end{fact}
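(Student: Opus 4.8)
The plan is to deduce both identities from the conjugation formula of Fact~\ref{f4}, using the braid relation of Fact~\ref{f6} to collapse the resulting word, and then to invoke the injectivity statement of Fact~\ref{f1} to read off the curve. Since $i(a,b)=1$, Fact~\ref{f6} supplies the braid relation $T_aT_bT_a = T_bT_aT_b$, which is the only nontrivial input; everything else is bookkeeping with twists.

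First I would set $f = T_aT_b$ and apply Fact~\ref{f4}, which gives $fT_af^{-1} = T_{f(a)}$; spelling this out, $T_aT_bT_aT_b^{-1}T_a^{-1} = T_{T_aT_b(a)}$. I would then simplify the left-hand side: substituting the braid relation $T_aT_bT_a = T_bT_aT_b$ for the first three factors turns the word into $T_bT_aT_bT_b^{-1}T_a^{-1}$, which telescopes (cancel $T_bT_b^{-1}$, then $T_aT_a^{-1}$) to $T_b$. Thus $T_{T_aT_b(a)} = T_b$, and Fact~\ref{f1} forces $T_aT_b(a) = b$, establishing the first claim.

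For the second identity $T_bT_a(b) = a$, I would run the identical argument with the roles of $a$ and $b$ exchanged, which is legitimate because $i(b,a) = i(a,b) = 1$, so the braid relation holds in the symmetric form $T_bT_aT_b = T_aT_bT_a$. Concretely, setting $g = T_bT_a$ and conjugating $T_b$ gives $T_{g(b)} = T_bT_aT_bT_a^{-1}T_b^{-1} = T_aT_bT_aT_a^{-1}T_b^{-1} = T_a$, whence $T_bT_a(b) = a$. There is no genuine obstacle here: the entire content is the one-line word simplification powered by the braid relation, and the only point requiring care is performing the conjugation and the telescoping cancellations in the correct order so that the braid substitution lands on the leading three letters.
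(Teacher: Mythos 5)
Your proof is correct and follows exactly the route the paper intends: the paper gives no written proof, stating only that Fact~\ref{f7} is an easy consequence of Facts~\ref{f1}, \ref{f4}, and \ref{f6}, and your argument—conjugating via Fact~\ref{f4}, collapsing the word with the braid relation of Fact~\ref{f6}, and reading off the curve with Fact~\ref{f1}—is precisely that deduction, with both computations carried out correctly.
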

 \medskip
 
\begin{fact} \label{f8}
If $a \neq b$ and $T_a^p = T_b^q$ for some $p,q \in \mathbb{Z}$, then $p = q = 0$.
\end{fact}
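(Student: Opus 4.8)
The plan is to reduce the statement to the construction of a single auxiliary curve and then invoke the intersection-number formula of Fact~\ref{f3}. First, if $p = 0$ then $T_b^q = \mathrm{id}$, and Fact~\ref{f2} forces $q = 0$; hence it is enough to show that $p \neq 0$ is impossible, since then $p = 0$ and the previous remark closes the argument. So I would assume $p \neq 0$ and aim for a contradiction. The crux is the claim that there exists an essential simple closed curve $c$ with $i(c,b) = 0$ and $i(c,a) > 0$.

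Granting such a $c$, the contradiction is immediate. Since $i(c,b) = 0$, the curves $c$ and $b$ have disjoint representatives, so $T_b$ is supported off $c$ and $T_b^q(c) = c$. Applying the hypothesized identity $T_a^p = T_b^q$ to $c$ gives $T_a^p(c) = T_b^q(c) = c$. But Fact~\ref{f3} gives $i(T_a^p(c),c) = |p|\,i(a,c)^2 > 0$, using $p \neq 0$ and $i(a,c) > 0$; since $i(c,c) = 0$, the classes $T_a^p(c)$ and $c$ are distinct, a contradiction.

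It remains to produce $c$, which I view as the heart of the matter. When $i(a,b) \geq 1$ one takes $c = b$: then $i(c,b) = i(b,b) = 0$ while $i(a,c) = i(a,b) \geq 1$. The delicate case is $i(a,b) = 0$. Here I would cut $S$ along a representative of $b$, equivalently pass to the essential subsurface $\Sigma = S \setminus N(b)$, into which $a$ can be isotoped. Using $a \neq b$ and the essentiality of $a$, one checks that $a$ is essential and non-peripheral in $\Sigma$: it bounds no disk (else it would in $S$), it is not parallel to an original boundary component (else $a$ would fail to be essential in $S$), and it is not parallel to a copy of $b$ (else $a = b$). Thus $\Sigma$ is neither a disk, annulus, nor pair of pants, so it contains a simple closed curve $c$ with $i_\Sigma(c,a) > 0$. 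As $\Sigma$ is incompressible, geometric intersection numbers computed in $\Sigma$ agree with those in $S$, giving $i(c,a) > 0$, while $i(c,b) = 0$ because $c$ lies in the complement of $N(b)$.

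The main obstacle is precisely this construction in the disjoint case: one must ensure both that $a$ persists as an essential, non-peripheral curve after cutting along $b$, and that the transverse curve built inside $\Sigma$ still meets $a$ essentially once viewed in $S$. This is where the hypotheses $a \neq b$ and ``$a$ essential'' are genuinely used, and it also clarifies why the disjoint case is vacuous on low-complexity surfaces such as the torus, the once-punctured torus, and the four-holed sphere, on which any two disjoint essential simple closed curves are automatically isotopic.
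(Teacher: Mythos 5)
Your proof is correct, but there is nothing in the paper to compare it against: Fact~\ref{f8} is stated as background with no proof at all, the reader being implicitly deferred to the references [\ref{FM}], [\ref{Iv}], [\ref{B}]. What you have written is essentially the standard argument recorded in [\ref{FM}]: reduce to producing a curve $c$ with $i(c,b) = 0$ and $i(c,a) > 0$, observe that $T_b^q$ fixes $c$ while Fact~\ref{f3} forbids $T_a^p$ from fixing $c$ when $p \neq 0$, and then handle the existence of $c$ by cases. Your case split is handled correctly: $c = b$ works when $i(a,b) \geq 1$, and when $i(a,b) = 0$ your verification that $a$ stays essential and non-peripheral in $S \setminus N(b)$ — using both hypotheses, $a \neq b$ and essentiality of $a$ — is exactly where the content lies, and it also explains why the argument never runs into trouble on low-complexity surfaces. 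Two steps are quoted rather than proved: the existence of a curve transverse to an essential non-peripheral curve in a surface that is not a disk, annulus, or pair of pants, and the agreement of geometric intersection numbers between an incompressible subsurface and the ambient surface. Both are standard consequences of the change-of-coordinates principle (or of realizing curves by geodesics), and citing them is at the same level of rigor as the facts this paper itself states without proof, so the proposal stands as a complete and correct proof of the statement.
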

 \medskip

\begin{thm}[2-Chain Relation]\label{chain relation}
If $i(a,b) = 1$, then $(T_aT_b)^6 = T_c$ in Mod(S), where $c$ is the boundary of a closed regular neighborhood of $a \cup b$. In particular, $T_c$ is trivial when $S = S_{1,0}$ and so $(T_aT_b)^6 = 1$ in this case. 
\end{thm}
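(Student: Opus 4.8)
The plan is to reduce the relation to a computation inside the mapping class group of a one‑holed torus and then to identify the resulting element. Since $i(a,b)=1$, choose representatives $\alpha\in a$ and $\beta\in b$ meeting transversely in a single point, so that a closed regular neighborhood $N$ of $\alpha\cup\beta$ is a one‑holed torus $S_{1,1}$ whose boundary is isotopic to $c$. Both $T_a$ and $T_b$ are supported on $N$, hence so is $(T_aT_b)^6$; and any homeomorphism supported on $N$ that fixes $\partial N$ pointwise extends by the identity across $S\setminus N$. This extension gives a homomorphism $\mathrm{Mod}(S_{1,1})\to\mathrm{Mod}(S)$ carrying the boundary twist to $T_c$ and each core twist to the corresponding $T_a$ or $T_b$. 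It therefore suffices to establish $(T_aT_b)^6=T_{\partial N}$ in $\mathrm{Mod}(S_{1,1})$, where now $a,b$ denote the core curves of $N$.

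First I would dispose of the closed case $S=S_{1,0}$, which simultaneously yields the ``in particular'' claim. Here $S_{1,0}\setminus N$ is a disk, so $c$ bounds a disk and $T_c=1$. Under the standard isomorphism $\mathrm{Mod}(S_{1,0})\cong SL_2(\mathbb{Z})$ afforded by the action on $H_1(S_{1,0};\mathbb{Z})\cong\mathbb{Z}^2$, the twists $T_a,T_b$ map to $\left(\begin{smallmatrix}1&1\\0&1\end{smallmatrix}\right)$ and $\left(\begin{smallmatrix}1&0\\-1&1\end{smallmatrix}\right)$, whose product $\left(\begin{smallmatrix}0&1\\-1&1\end{smallmatrix}\right)$ has characteristic polynomial $\lambda^2-\lambda+1$ and is thus elliptic of order $6$. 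Faithfulness of the isomorphism forces $(T_aT_b)^6=1=T_c$. For the general $S_{1,1}$ I would then invoke the capping homomorphism: capping $\partial N$ with a once‑punctured disk turns $S_{1,1}$ into a once‑marked torus, and the induced map $\mathrm{Mod}(S_{1,1})\to SL_2(\mathbb{Z})$ is surjective with kernel the infinite cyclic group $\langle T_c\rangle$. The matrix computation above shows $(T_aT_b)^6$ maps to the identity, so $(T_aT_b)^6=T_c^{\,m}$ for some $m\in\mathbb{Z}$, and the entire problem collapses to showing $m=1$.

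The main obstacle is precisely pinning down this exponent together with its sign, since membership in the kernel and centrality only determine $(T_aT_b)^6$ up to a power of $T_c$. I would resolve it with a finer local model, namely the Birman--Hilden description of $S_{1,1}$ as the two‑fold cover of a disk $D$ branched over three points (which is consistent with $\chi(S_{1,1})=2\cdot 1-3=-1$ and with $\partial D$ lifting to a single circle $c$). Under this cover the half‑twists $\sigma_1,\sigma_2$ lift to $T_a,T_b$, inducing the isomorphism $\mathrm{Mod}(S_{1,1})\cong\mathcal{B}_3$. In the disk braid group the boundary twist satisfies $T_{\partial D}=(\sigma_1\sigma_2)^3=\Delta^2$, and a direct annular computation on a connected double cover of a collar of $\partial D$ shows that it is $T_{\partial D}^{\,2}$, rather than $T_{\partial D}$ itself, that lifts to the single Dehn twist $T_c$. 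Hence $(T_aT_b)^6=(\sigma_1\sigma_2)^6=T_{\partial D}^{\,2}$ lifts exactly to $T_c$, giving $m=1$ with the correct orientation. Comparing exponent sums under the abelianization $\mathcal{B}_3\to\mathbb{Z}$, where both $(\sigma_1\sigma_2)^6$ and the positive twist $T_c$ are positive, provides an independent check fixing the sign and completing the argument.
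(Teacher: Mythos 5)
The paper never proves this statement: it appears in Section~2 as quoted background, with the reader referred at the start of that section to Farb--Margalit, Ivanov, and Birman, so there is no in-paper argument to measure you against. Judged on its own, your proof is correct and is essentially the standard argument from the very source the paper cites (the Primer): reduce to the one-holed torus $N$ spanned by $\alpha\cup\beta$, use that inclusion induces a homomorphism $\mathrm{Mod}(N)\to\mathrm{Mod}(S)$ sending $T_{\partial N}\mapsto T_c$, dispose of $S_{1,0}$ by the faithful action on $H_1$, use the capping sequence to get $(T_aT_b)^6\in\langle T_c\rangle$, and pin down the exponent through the double cover of the disk branched at three points. You also correctly isolate the one genuinely delicate point, namely that the boundary twist $T_{\partial D}=(\sigma_1\sigma_2)^3$ lifts only to a half twist along $c$, so that it is $T_{\partial D}^{2}=(\sigma_1\sigma_2)^6$ that lifts to $T_c$; this is exactly where the exponent $6$, rather than $3$, comes from. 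Two caveats. First, your closing ``independent check'' via the abelianization of $\mathcal{B}_3$ is circular as a sign determination: the image of $T_c$ in the abelianization of $\mathrm{Mod}(S_{1,1})$ is not known until $T_c$ has been expressed in the twist generators, which is the relation being proved; the sign really comes from the orientation bookkeeping in your annular lifting computation, so the argument should lean on that alone. Second, there is a more elementary standard route worth knowing: the Alexander method, i.e.\ verifying directly that $(T_aT_b)^6$ and $T_{\partial N}$ act identically on a filling collection of arcs in $S_{1,1}$. That proof avoids quoting the Birman--Hilden theorem, whose boundary-pointwise-fixing formulation is precisely the technical spot where your factor-of-two subtlety lives, and it would make the write-up self-contained at the cost of a longer picture computation.
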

 \medskip

\begin{thm}\label{two Dehn twist subgroups}
Denote by $\Gamma$ the subgroup of Mod(S) generated by $T_a$ and $T_b$. Then 
\begin{list}{\labelitemi}{\leftmargin=1em}
\item $\Gamma \cong \mathbb{Z}^2$ if $i(a,b) = 0$
\item $\Gamma \cong SL_2(\mathbb{Z})$ if $i(a,b) = 1$ and $S = S_{1,0}$
\item $\Gamma \cong \mathcal{B}_3$ if $i(a,b) = 1$ and $S \neq S_{1,0}$
\item $\Gamma \cong \mathbb{F}_2$ if $i(a,b) \geq 2$ (Ishida [\ref{I}])
\end{list}
\end{thm}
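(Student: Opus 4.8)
The plan is to split according to the intersection number $i(a,b)$, using in each case the relevant standard relation among $T_a$ and $T_b$ to pin down the isomorphism type of $\Gamma$. Two of the four cases are short. When $i(a,b)=0$, Fact~\ref{f5} says $T_a$ and $T_b$ commute, so there is a surjection $\mathbb{Z}^2 \twoheadrightarrow \Gamma$ carrying the standard basis to $(T_a,T_b)$; to see it is injective I would suppose $T_a^p T_b^q = 1$, rewrite it as $T_a^p = T_b^{-q}$, and invoke Fact~\ref{f8} (valid since $a \neq b$) to force $p=q=0$, giving $\Gamma \cong \mathbb{Z}^2$. The case $i(a,b) \geq 2$ I would simply cite from Ishida [\ref{I}]: a ping-pong argument on the curves shows $T_a$ and $T_b$ satisfy no relation, so $\Gamma \cong \mathbb{F}_2$.

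For $i(a,b)=1$ the braid relation holds by Fact~\ref{f6}, so the assignment $x \mapsto T_a$, $y \mapsto T_b$ defines a surjection $\phi \colon \mathcal{B}_3 = \langle x,y \mid xyx = yxy \rangle \twoheadrightarrow \Gamma$. The center $Z(\mathcal{B}_3)$ is infinite cyclic, generated by $z = (xy)^3$, and by the $2$-Chain Relation (Theorem~\ref{chain relation}) we have $\phi(z^2) = (T_aT_b)^6 = T_c$, where $c = \partial N$ for a closed regular neighborhood $N$ of $a \cup b$, a one-holed torus. When $S = S_{1,0}$ the curve $c$ bounds a disk, so $T_c = 1$ and $\phi$ factors through $\mathcal{B}_3/\langle z^2 \rangle \cong SL_2(\mathbb{Z})$. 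To finish this subcase I would identify $\Gamma$ with the whole group: the homological action gives $\mathrm{Mod}(S_{1,0}) \cong SL_2(\mathbb{Z})$, and since $i(a,b)=1$ the classes of $a$ and $b$ form a basis of $H_1(S_{1,0})$, so $T_a$ and $T_b$ map to the two elementary generators of $SL_2(\mathbb{Z})$; as these generate, $\Gamma = \mathrm{Mod}(S_{1,0}) \cong SL_2(\mathbb{Z})$.

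The remaining subcase, $i(a,b)=1$ with $S \neq S_{1,0}$, carries the main difficulty: showing that $\phi$ is injective. Here $c$ is essential, so $T_c$ has infinite order by Fact~\ref{f2}, hence so does $\phi(z)$, and therefore $\ker\phi \cap Z(\mathcal{B}_3) = 1$. I expect this to be the crux, because that observation is \emph{not} by itself enough: $\mathcal{B}_3$ has nontrivial normal subgroups meeting its center trivially (for instance its commutator subgroup, which is free of rank two), so ruling out a central relation still leaves open the possibility of further relations, and the geometry of the embedding must enter.

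The route I would take for this subcase is to realize $\Gamma$ as the image of the inclusion-induced homomorphism $\mathrm{Mod}(N) \to \mathrm{Mod}(S)$, using the identification $\mathrm{Mod}(N) = \mathrm{Mod}(S_{1,1}) \cong \mathcal{B}_3$ with $T_a$ and $T_b$ as the standard generators. Because $c = \partial N$ is essential and fails to bound a disk exactly when $S \neq S_{1,0}$, this subsurface inclusion is injective, and its image is precisely $\langle T_a, T_b \rangle = \Gamma$, giving $\Gamma \cong \mathcal{B}_3$. The injectivity of the inclusion is the technical heart of the whole theorem, and is where I anticipate leaning on the structural results of [\ref{FM}] and [\ref{Iv}]; this is also the cleanest way to see the torus result as a degenerate instance, since capping $c$ off there collapses $\mathcal{B}_3$ by the relation $T_c = (T_aT_b)^6 = 1$ to recover $SL_2(\mathbb{Z})$.
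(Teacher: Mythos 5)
Your proposal is correct, but there is nothing in the paper to compare it against: Theorem~\ref{two Dehn twist subgroups} is stated purely as background and is never proved in the paper --- the $i(a,b) \geq 2$ case is attributed to Ishida [\ref{I}], and the remaining cases are quoted from the standard literature [\ref{FM}]. What you have written is essentially that standard literature proof. The $i(a,b)=0$ case (commutativity via Fact~\ref{f5} plus injectivity via Fact~\ref{f8}, which indeed needs $a \neq b$) and the torus case (the classes of $a$ and $b$ form a basis of $H_1(S_{1,0})$, the twists act by transvections in that basis, and those transvections generate $SL_2(\mathbb{Z})$, so $\Gamma = Mod(S_{1,0})$) are complete as written. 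For $i(a,b)=1$ and $S \neq S_{1,0}$ you correctly identify both the crux and its resolution: checking that $\ker\phi$ meets $Z(\mathcal{B}_3)$ trivially is genuinely insufficient, exactly as you say (the commutator subgroup of $\mathcal{B}_3$ is a rank-two free normal subgroup meeting the center trivially), and the honest argument is the one you sketch --- take $N$ a closed regular neighborhood of $a \cup b$, a one-holed torus, use the identification $Mod(S_{1,1}) \cong \mathcal{B}_3$ with $T_a, T_b$ as the standard generators, and invoke injectivity of the inclusion homomorphism $Mod(N) \to Mod(S)$, which holds because the single boundary curve $c$ bounds no disk in $S \setminus N$ (the Paris--Rolfsen/Farb--Margalit result on kernels of inclusion homomorphisms, available in [\ref{FM}]). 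Two small refinements: the injectivity criterion is literally ``no boundary component of $N$ bounds a disk in the complement,'' not ``$c$ is essential'' --- when $S = S_{1,1}$ the curve $c$ is boundary-parallel, hence not essential in the usual sense, yet the map is still injective because boundary twists survive in $Mod(S)$ when $\partial S$ is fixed pointwise; and in the torus case your detour through $\mathcal{B}_3/\langle z^2 \rangle \cong SL_2(\mathbb{Z})$ is superfluous, since your homological argument already yields $\Gamma = Mod(S_{1,0}) \cong SL_2(\mathbb{Z})$ on its own.
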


\section{Group Theory}
 \label{Group Theory}
 
Given a free group on finitely many generators and a finite index subgroup $H$, the following theorem (Theorem 2.10 in [\ref{MKS}]), implies that $H$ is a free group and determines the number of its generators.

\begin{thm}\label{free group theorem}
Consider the free group $\mathbb{F}_p$ on $p$ generators and let $H$ be an index $q$ subgroup in $\mathbb{F}_p$. If $p$ and $q$ are finite, then $H$ is a free group on $q(p-1) + 1$ generators.
\end{thm}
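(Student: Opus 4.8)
The plan is to prove both assertions—that $H$ is free and that its rank is $q(p-1)+1$—by passing to the standard topological model of a free group and invoking covering space theory. First I would realize $\mathbb{F}_p$ as the fundamental group of the \emph{rose} $R_p$, the wedge of $p$ circles, which is a connected graph (a one-dimensional CW-complex) with a single vertex and $p$ edges. By the Galois correspondence for covering spaces, the index-$q$ subgroup $H \leq \pi_1(R_p) = \mathbb{F}_p$ corresponds to a connected $q$-sheeted covering $\widetilde{X} \to R_p$ with $\pi_1(\widetilde{X}) \cong H$. Since any covering of a graph is again a graph, $\widetilde{X}$ is a finite connected graph.

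Next I would establish freeness: the fundamental group of a connected graph is free. This follows by choosing a maximal spanning tree $T \subseteq \widetilde{X}$ and collapsing it; because $T$ is contractible, the quotient map $\widetilde{X} \to \widetilde{X}/T$ is a homotopy equivalence, and $\widetilde{X}/T$ is itself a rose, so $\pi_1(\widetilde{X}) \cong \pi_1(\widetilde{X}/T)$ is free on the edges of $\widetilde{X}$ not lying in $T$. This is the step I expect to be the crux, as it is precisely the Nielsen--Schreier phenomenon; in the topological setting, however, it becomes transparent once one knows that collapsing a contractible subcomplex is a homotopy equivalence.

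For the rank count I would use the multiplicativity of the Euler characteristic under finite coverings. The rose satisfies $\chi(R_p) = 1 - p$, so a $q$-fold cover has $\chi(\widetilde{X}) = q\,\chi(R_p) = q(1-p)$. On the other hand, for a connected graph the rank of the free fundamental group equals $1 - \chi$ (edges minus vertices, plus one). Hence $H$ is free of rank $1 - \chi(\widetilde{X}) = 1 - q(1-p) = q(p-1) + 1$, as claimed.

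Finally, I would remark that the same result admits a purely combinatorial derivation—matching the cited Theorem 2.10 of [\ref{MKS}]—via the Reidemeister--Schreier rewriting process: one fixes a Schreier transversal for the right cosets of $H$ and obtains free generators indexed by the $qp$ pairs (representative, generator) that fail to remain inside the transversal. Exactly $q-1$ of the $qp$ candidate generators are trivial (corresponding to the edges of the spanning tree in the Schreier graph), which again yields $qp - (q-1) = q(p-1)+1$ free generators and furnishes an explicit generating set should one be needed later.
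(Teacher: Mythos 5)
Your proof is correct, but note that the paper itself offers no argument for this statement: it is imported wholesale as Theorem 2.10 of [\ref{MKS}], where it is established by the combinatorial Reidemeister--Schreier method. Your main argument is instead topological, and every step is sound: the subgroup--cover correspondence produces a connected $q$-sheeted cover $\widetilde{X}$ of the rose $R_p$ with $\pi_1(\widetilde{X}) \cong H$; collapsing a spanning tree shows $\pi_1(\widetilde{X})$ is free (the Nielsen--Schreier step, which covering space theory makes painless); and multiplicativity of the Euler characteristic, $\chi(\widetilde{X}) = q(1-p)$, together with the identity $\mathrm{rank} = 1 - \chi$ for a connected finite graph, gives the rank $q(p-1)+1$. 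What your route buys is a short, self-contained, conceptual proof in which the rank formula falls out of a single numerical invariant rather than a count of rewriting data; what the cited combinatorial route buys is an explicit free generating set (the nontrivial Schreier generators), which can matter when one needs to compute with $H$. Your closing paragraph correctly sketches exactly that argument---$qp$ candidate generators, $q-1$ of them trivial, leaving $qp-(q-1) = q(p-1)+1$---so in effect you have supplied both the paper's implicit proof and an independent one. Two trivial quibbles: a spanning tree of a connected graph is automatically maximal, so ``maximal spanning tree'' is redundant; and you should say a word about basepoints (or conjugacy of subgroups) when invoking the Galois correspondence, since the correspondence as usually stated matches \emph{based} covers with subgroups on the nose and unbased covers only with conjugacy classes. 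Neither affects correctness here, as $H$ and any conjugate have the same index and, being isomorphic, the same rank.
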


\begin{thm}
Suppose that $G$ is a virtually abelian group and let $H$ be a subgroup of $G$. Then $H$ is virtually abelian
\end{thm}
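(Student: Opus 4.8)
The plan is to unwind the definition of \emph{virtually abelian} and transfer a finite-index abelian subgroup of $G$ into $H$ by intersecting with $H$. By hypothesis there is an abelian subgroup $A \leq G$ with $[G : A] = n < \infty$. I would take $B = H \cap A$ as the candidate finite-index abelian subgroup of $H$, so the whole proof splits into checking two claims about $B$: that it is abelian, and that it has finite index in $H$.

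The first claim is immediate, since $B = H \cap A$ is a subgroup of the abelian group $A$ and hence abelian; this costs no work and reduces the entire problem to bounding the index $[H : B]$. For the second claim, the key step is the standard index inequality $[H : H \cap A] \leq [G : A]$. I would prove it by considering the map sending a left coset $hB$ (with $h \in H$) to the coset $hA$ in $G/A$. This map is well defined, because $hB = h'B$ forces $h^{-1}h' \in B \subseteq A$ and hence $hA = h'A$; and it is injective, because $hA = h'A$ gives $h^{-1}h' \in A$, which combined with $h^{-1}h' \in H$ yields $h^{-1}h' \in H \cap A = B$, so $hB = h'B$. Thus the cosets of $B$ in $H$ inject into the cosets of $A$ in $G$, giving $[H : B] \leq [G : A] = n < \infty$.

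Combining the two claims, $B = H \cap A$ is an abelian subgroup of finite index in $H$, so $H$ is virtually abelian. I do not expect any genuine obstacle here: the argument is purely group-theoretic and uses nothing about $\mathrm{Mod}(S)$ or Dehn twists. The only point demanding a moment of care is the well-definedness and injectivity of the coset map, where one must work consistently with left (or, symmetrically, right) cosets throughout so that the inclusion $B \subseteq A$ is applied in the correct direction.
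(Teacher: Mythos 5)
Your proof is correct and follows the same approach as the paper: intersect $H$ with the finite-index abelian subgroup $A \leq G$ and apply the index inequality $[H : H \cap A] \leq [G : A]$. The only difference is that you prove this inequality via the coset injection $hB \mapsto hA$, whereas the paper simply cites it as standard.
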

\begin{proof}
$G$ has a finite index subgroup $K$ which is abelian. Since $[H : H \cap K] \leq [G : K ]$, it follows that $H \cap K$ has finite index in $H$. Moreover, $H \cap K$ is abelian as it is a subgroup of $K$. Therefore, $H$ is virtually abelian.
\end{proof}

\begin{cor}\label{B3 is not va}
The braid group $\mathcal{B}_3$ is not virtually abelian.
\end{cor}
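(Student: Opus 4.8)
The plan is to leverage the theorem established immediately above, which guarantees that every subgroup of a virtually abelian group is again virtually abelian. Taking the contrapositive, to show that $\mathcal{B}_3$ is not virtually abelian it suffices to exhibit a single subgroup $H \leq \mathcal{B}_3$ that fails to be virtually abelian. The natural candidate is a free group of rank two: I would first verify that $\mathbb{F}_2$ is not virtually abelian, and then locate a copy of $\mathbb{F}_2$ inside $\mathcal{B}_3$.

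First I would dispatch the claim that $\mathbb{F}_2$ is not virtually abelian, and here Theorem~\ref{free group theorem} does all the work. Let $H \leq \mathbb{F}_2$ be any subgroup of finite index $q$. By that theorem (with $p = 2$), $H$ is free on $q(2-1) + 1 = q + 1$ generators. Since $q \geq 1$, the rank of $H$ is at least two, so $H$ is a nonabelian free group and in particular not abelian. Thus $\mathbb{F}_2$ possesses no abelian subgroup of finite index, which is exactly the statement that $\mathbb{F}_2$ is not virtually abelian.

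It then remains to embed $\mathbb{F}_2$ into $\mathcal{B}_3$. I would use the pure braid group $P_3 = \ker(\mathcal{B}_3 \to S_3)$, which has finite index $6$ and is known to split as $P_3 \cong \mathbb{F}_2 \times \mathbb{Z}$, the $\mathbb{Z}$ factor being the center of $\mathcal{B}_3$ generated by $(\sigma_1\sigma_2)^3$; restricting to the free factor yields the desired subgroup $\mathbb{F}_2 \leq P_3 \leq \mathcal{B}_3$. An equally good route is to use the surjection $\mathcal{B}_3 \twoheadrightarrow PSL_2(\mathbb{Z}) \cong \mathbb{Z}_2 * \mathbb{Z}_3$ together with the observation that a homomorphic image of a virtually abelian group is again virtually abelian (the image of a finite-index abelian subgroup is finite-index and abelian). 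This reduces matters to showing that $\mathbb{Z}_2 * \mathbb{Z}_3$ is not virtually abelian, and its commutator subgroup is free of rank two of index six, so the previous paragraph applies. Either way, once a nonabelian free subgroup (or a non-virtually-abelian quotient) is in hand, the preceding theorem produces the contradiction and completes the proof.

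The main obstacle is precisely the structural input identifying a free group of rank two inside $\mathcal{B}_3$; everything else is a routine application of the two cited theorems. I would present the pure-braid splitting $P_3 \cong \mathbb{F}_2 \times \mathbb{Z}$ as the cleanest justification, citing a standard reference for braid groups. If one prefers to keep the argument fully self-contained, the $PSL_2(\mathbb{Z})$ quotient combined with a short ping-pong argument exhibiting two free generators is the most economical substitute.
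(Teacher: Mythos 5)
Your proposal is correct, and its logical skeleton---apply the contrapositive of the preceding theorem by exhibiting a non-virtually-abelian subgroup of $\mathcal{B}_3$, namely a copy of $\mathbb{F}_2$---is exactly the paper's. Where you genuinely diverge is in how you locate that $\mathbb{F}_2$. The paper stays inside its own mapping class group machinery: it realizes $\mathcal{B}_3$ as $\langle T_a,T_b \rangle$ with $i(a,b)=1$ and $S \neq S_{1,0}$ (Theorem~\ref{two Dehn twist subgroups}), notes that $i(T_a^2(b),b)=2$ by Fact~\ref{f3}, and invokes Ishida's result (the last item of Theorem~\ref{two Dehn twist subgroups}) to conclude that $T_{T_a^2(b)} = T_a^2 T_b T_a^{-2}$ and $T_b$ generate $\mathbb{F}_2$; by Fact~\ref{f4} this twist lies in $\langle T_a,T_b \rangle$, so the free group sits inside the copy of $\mathcal{B}_3$. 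You instead import standard braid-theoretic structure external to the paper: the pure braid splitting $P_3 \cong \mathbb{F}_2 \times \mathbb{Z}$, or the central quotient $\mathcal{B}_3 \twoheadrightarrow PSL_2(\mathbb{Z}) \cong \mathbb{Z}_2 * \mathbb{Z}_3$ (the latter requiring the additional, easy, observation that virtual abelianness passes to quotients, which the paper never states but which you correctly justify). Each route buys something: the paper's is self-contained relative to its own toolkit and thematically consonant with the rest of the argument, whereas yours needs an outside citation for the structure of $P_3$; on the other hand, you actually prove the one assertion the paper leaves unjustified---that $\mathbb{F}_2$ is not virtually abelian---by a clean application of Theorem~\ref{free group theorem} (any subgroup of finite index $q$ in $\mathbb{F}_2$ is free of rank $q+1 \geq 2$, hence nonabelian), which is a genuine gain in completeness, and pleasingly uses a theorem the paper records but never exploits for this purpose.
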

\begin{proof}
Choose $a$ and $b$ in $S \neq S_{1,0}$ so that $i(a,b) = 1$. By Theorem~\ref{two Dehn twist subgroups}, the group generated by $T_a$ and $T_b$ is isomorphic to $\mathcal{B}_3$. By Fact~\ref{f3}, $i(T_a^2(b),b) = 2$, and it follows from Theorem~\ref{two Dehn twist subgroups} that $T_{T_a^2(b)}$ and $T_b$ generate the free group $\mathbb{F}_2$ of order two. Since $\mathbb{F}_2$ is not virtually abelian, $\langle T_a,T_b \rangle$, and consequently $\mathcal{B}_3$, is not virtually abelian. 
\end{proof}

\begin{cor}\label{modular group is not va}
The modular group $SL_2(Z)$ is not virtually abelian. 
\end{cor}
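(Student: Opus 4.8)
The plan is to mimic the proof of Corollary~\ref{B3 is not va}, only now working on the torus $S_{1,0}$. The guiding principle is the same: by the preceding theorem, a subgroup of a virtually abelian group is again virtually abelian, so to prove that $SL_2(\mathbb{Z})$ is \emph{not} virtually abelian it suffices to exhibit inside it a single subgroup that fails to be virtually abelian. Since the free group $\mathbb{F}_2$ of rank two is not virtually abelian (it plays exactly this role in Corollary~\ref{B3 is not va}), the whole argument reduces to locating a copy of $\mathbb{F}_2$ in $SL_2(\mathbb{Z})$.

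To do this I would first realize $SL_2(\mathbb{Z})$ geometrically: choose $a$ and $b$ on $S_{1,0}$ with $i(a,b) = 1$, so that Theorem~\ref{two Dehn twist subgroups} gives $\langle T_a,T_b\rangle \cong SL_2(\mathbb{Z})$. I would then produce a pair of curves of larger intersection number living inside this group. Putting $c = T_a^2(b)$, Fact~\ref{f3} gives $i(c,b) = 2\,i(a,b)^2 = 2$, while Fact~\ref{f4} gives $T_c = T_a^2 T_b T_a^{-2}$, so both $T_c$ and $T_b$ lie in $\langle T_a,T_b\rangle$. Since $i(c,b) \geq 2$, the relevant case of Theorem~\ref{two Dehn twist subgroups} yields $\langle T_c,T_b\rangle \cong \mathbb{F}_2$. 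Hence $\mathbb{F}_2 \cong \langle T_c,T_b\rangle \leq \langle T_a,T_b\rangle \cong SL_2(\mathbb{Z})$, and the contrapositive of the preceding theorem forces $SL_2(\mathbb{Z})$ to fail to be virtually abelian.

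The one step that genuinely needs checking --- and the main obstacle --- is the appeal to the $i(a,b)\geq 2$ case of Theorem~\ref{two Dehn twist subgroups} on the lowest-complexity surface $S_{1,0}$, since Ishida's theorem is most often quoted for surfaces of higher genus. If one prefers not to rely on it in this borderline case, the conclusion can be recovered by a direct matrix computation. Viewing $T_b$ and $T_c$ as elements of $SL_2(\mathbb{Z})$, both are parabolic; after conjugating so that their fixed points are $\infty$ and $0$, they take the triangular forms $\bigl(\begin{smallmatrix}1&s\\0&1\end{smallmatrix}\bigr)$ and $\bigl(\begin{smallmatrix}1&0\\t&1\end{smallmatrix}\bigr)$, and one checks that $|st| = i(c,b)^2 = 4 \geq 4$. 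The classical ping-pong criterion for two such parabolics then shows that they generate a free group of rank two, producing the required copy of $\mathbb{F}_2$ without invoking Ishida's theorem on the torus.
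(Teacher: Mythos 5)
Your argument is correct and is essentially the paper's own proof: the paper also takes $c = T_a^2(b)$, gets $i(c,b) = 2$ from Fact~\ref{f3}, and applies the $i(a,b) \geq 2$ case of Theorem~\ref{two Dehn twist subgroups} to embed $\mathbb{F}_2 \cong \langle T_c,T_b \rangle$ into $\langle T_a,T_b \rangle \cong SL_2(\mathbb{Z})$, exactly as in Corollary~\ref{B3 is not va}. Your worry about invoking Ishida's theorem on the torus is unnecessary---Theorem~\ref{two Dehn twist subgroups} as stated places no restriction on $S$ in the $i(a,b) \geq 2$ case, and on the torus it reduces to the classical fact about parabolic matrices that your ping-pong fallback reproves---so that last paragraph is a harmless safety net rather than a needed repair.
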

\begin{proof}
Choose $a$ and $b$ in $S = S_{1,0}$ so that $i(a,b) = 1$. $T_a$ and $T_b$ generate Mod(S), which is known isomorphic to $SL_2(Z)$ (see [\ref{FM}]). As in the proof of Corollary~\ref{B3 is not va}, $T_{T_a^2(b)}$ and $T_b$ generate $\mathbb{F}_2$, which not virtually abelian. Therefore, $SL_2(Z)$ is not virtually abelian.
\end{proof}

\section{Proof of the Main Theorem}
 \label{proof of the main theorem}
 
Recall that $X \in \{(T_aT_b)^k,(T_bT_a)^k\}$, $k \in \mathbb{Z}$, and $Y = T_a$.
 \medskip
 
If $k = 0$, then $X$ is trivial and $G = \langle X,Y \rangle = \langle Y \rangle$. By Fact~\ref{f2}, $Y$ has infinite order, and so $G = \langle T_a \rangle \cong \mathbb{Z}$.
 \medskip
 
Assume $k \neq 0$.
 \medskip
 
If $i(a,b) = 0$, then $\langle T_a,T_b \rangle \cong \mathbb{Z}^2$ by Theorem~\ref{two Dehn twist subgroups}. In particular, $X = T_a^k T_b^k$ and hence, $G = \langle T_a,T_b^k \rangle$. Since $T_a$ and $T_b$ commute, every element in $G$ can be expressed in the form $T_a^{\alpha} T_b^{\beta}$ for some $\alpha, \beta \in \mathbb{Z}$. If $T_a^{\alpha} T_b^{\beta} = 1$, it follows from Fact~\ref{f8} that $\alpha = \beta = 0$. As such, $G$ is torsion free. This fact combined with $[X,Y] = 1$ imply that $G \cong \mathbb{Z}^2$. Moreover, $\langle T_a,T_b \rangle / \langle T_a,T_b^k \rangle \cong \mathbb{Z}_k$ and so $G = \langle T_a,T_b^k \rangle$ has index $k$ in $\langle T_a,T_b \rangle$.
 \medskip
 
Now suppose that $i(a,b) \geq 2$. By Theorem~\ref{two Dehn twist subgroups}, $\langle T_a,T_b \rangle \cong \mathbb{F}_2$. Since $G$ is a two generator subgroup of the free group $\mathbb{F}_2$, $G$ is either infinite cyclic or free on two generators. As the generators $X$ and $Y$ of $G$ have no common powers, $G$ is isomorphic to $\mathbb{F}_2$. If $k = \pm 1$, then $G = \langle T_a,(T_aT_b)^{\pm 1} \rangle = \langle T_a,T_b \rangle$. Now assume $k \neq \pm 1$. If $G$ has finite index $q$ in $\langle T_a,T_b \rangle$, then $q = 1$ by Theorem~\ref{free group theorem}. But $\langle T_a,T_b \rangle = \langle T_a, T_aT_b \rangle$ and $\langle T_a,T_aT_b \rangle$ modulo the normal closure of $\langle T_a,(T_aT_b)^k \rangle$ is isomorphic to the cyclic group $\mathbb{Z}_k$ of order $k$. As such, $G$ is a proper subgroup in $\langle T_a,T_b \rangle$ and so $q \neq 1$; a contradiction. Therefore, $G$ has infinite index in $\langle T_a,T_b \rangle$. The case when $X = (T_bT_a)^k$ and $Y = T_a$ follows similarly.
 \medskip
 
As shown above, note that $G$ and  $\langle T_a,T_b \rangle$ are isomorphic groups when $i(a,b) \neq 1$ and $k \neq 0$. More precisely, $G$ and $\langle T_a,T_b \rangle$ are both isomorphic to $\mathbb{Z}^2$ when $i(a,b) = 0$ and $k \neq 0$, and both groups are isomorphic to $\mathbb{F}_2$ when $i(a,b) \geq 2$ and $k \neq 0$.
 \medskip
 
The proof of the Main Theorem when $i(a,b) = 1$ is done through Lemma~\ref{lemma1}, Proposition~\ref{lemma2}, and Lemma~\ref{lemma3}. Lemma~\ref{lemma1} shows that conjugating $Y$ with $X$ depends primarily on $k$ and the conjugation can be easily determined once we specify the residue of $k$ modulo three. Proposition~\ref{lemma2} shows that $G$ is equal to $\langle T_a,T_b \rangle$ whenever $i(a,b) = 1$ and $k$ is not a multiple of three. Finally, Lemma~\ref{lemma3} investigates the structure of $G$ when $k$ is a multiple of three.

\begin{lem}\label{lemma1}
Let $k$ be a positive integer and suppose that $a$ and $b$ are isotopy classes such that $i(a,b) = 1$. Then 

\begin{table}[h!]
\begin{center}
 \begin{tabular}{|c|c|c|c|}
  \hline
 
 & 
$k \equiv 0 \hspace{0.1cm} mod(3)$ 
 & 
$k \equiv 1 \hspace{0.1cm} mod(3)$ 
 & 
$k \equiv 2 \hspace{0.1cm} mod(3)$ \\
  \hline 
$(T_aT_b)^k T_a (T_aT_b)^{-k}$ 
 &
$T_a$
 & 
$T_b$
 & 
$T_a T_b T_a^{-1}$ \\
  \hline
$(T_aT_b)^{-k} T_a (T_aT_b)^k$
 &
$T_a$
 &
$T_a T_b T_a^{-1}$
 &
$T_b$ \\
  \hline
$(T_bT_a)^k T_a (T_bT_a)^{-k}$ 
 &
$T_a$
 & 
$T_b T_a T_b^{-1}$
 & 
$T_b$ \\
  \hline
$(T_bT_a)^{-k} T_a (T_bT_a)^k$ 
 &
$T_a$
 & 
$T_b$
 & 
$T_a^{-1} T_b T_a$ \\
  \hline
 \end{tabular}
 \label{table1}
\end{center}
\end{table}     
\end{lem}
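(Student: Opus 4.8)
The plan is to reduce every entry of the table to the computation of the action of a power of $T_aT_b$ or $T_bT_a$ on the isotopy class $a$, and then to identify the resulting Dehn twist. The engine is Fact~\ref{f4}: for any word $w$ in the twists we have $w\,T_a\,w^{-1} = T_{w(a)}$. Thus each of the four conjugations tabulated equals $T_{w(a)}$, where $w$ is the corresponding power of $T_aT_b$ or $T_bT_a$, and it suffices to compute the four curves $(T_aT_b)^{\pm k}(a)$ and $(T_bT_a)^{\pm k}(a)$ and then re-express each as the image of $a$ or of $b$ under a single twist, so that Fact~\ref{f4} returns the stated representative.

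The observation that collapses the dependence on $k$ to a dependence on $k \bmod 3$ is the periodicity $(T_aT_b)^3(a) = a$, together with its symmetric counterpart $(T_bT_a)^3(a) = a$. I would establish this first. Since $i(a,b) = 1$, the braid relation (Fact~\ref{f6}) lets me rewrite $(T_aT_b)^3 = (T_aT_bT_a)^2$. Applying $T_aT_bT_a$ to $a$ and using $T_aT_b(a) = b$ (Fact~\ref{f7}) gives $(T_aT_bT_a)(a) = b$, and applying it again with $T_bT_a(b) = a$ gives $(T_aT_bT_a)(b) = a$; hence $(T_aT_b)^3(a) = (T_aT_bT_a)^2(a) = a$. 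The same manipulation with the roles of $a$ and $b$ interchanged yields $(T_bT_a)^3(a) = a$, and the inverse statements follow at once. This reduces each of the four rows to the three residues $k \equiv 0,1,2 \pmod 3$.

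With the periodicity in hand, what remains is a short finite computation per residue, all driven by Fact~\ref{f7}. For the first row, $(T_aT_b)^0(a) = a$, $(T_aT_b)^1(a) = b$, and $(T_aT_b)^2(a) = T_aT_b(b) = T_a(b)$; feeding these three curves into Fact~\ref{f4} produces $T_a$, $T_b$, and $T_aT_bT_a^{-1}$. The other three rows go through identically: inverse actions are recovered from periodicity (for example $(T_aT_b)^{-1}(a) = (T_aT_b)^2(a)$), and the $T_bT_a$ rows use $T_bT_a(a) = T_b(a)$ in place of $T_aT_b(a) = b$.

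The main place to be careful---and the only genuine obstacle---is bookkeeping, in two guises. First, one must compose the twists in the correct order and track the curve images through Fact~\ref{f7} without sign or orientation errors. Second, several entries admit two different-looking representatives, and the table commits to a specific one; I would verify each such coincidence explicitly. For instance, $T_aT_b(a) = b$ forces $T_b(a) = T_a^{-1}(b)$, so the curve $T_b(a)$ yields both $T_bT_aT_b^{-1}$ and $T_a^{-1}T_bT_a$, and these agree precisely because $T_aT_bT_a = T_bT_aT_b$ rearranges to $T_bT_aT_b^{-1} = T_a^{-1}T_bT_a$. This is why, e.g., the $k \equiv 2$ entry of the fourth row may be recorded as $T_a^{-1}T_bT_a$ rather than the equally valid $T_bT_aT_b^{-1}$. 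Checking each printed representative against the braid relation is what makes the finished table self-consistent.
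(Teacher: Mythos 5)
Your proposal is correct, but it takes a genuinely different route from the paper's. The paper proves the first row by induction on $k$: it verifies the base cases $k=1,2,3$ by direct word manipulation with the braid relation, shows in the inductive step that conjugating by one more factor of $T_aT_b$ cycles the three answers $T_a \mapsto T_b \mapsto T_aT_bT_a^{-1} \mapsto T_a$, and declares the remaining three rows ``similar.'' You instead push everything down to the action on curves: by Fact~\ref{f4} each tabulated conjugation equals $T_{w(a)}$ for the appropriate power $w$, and the single geometric identity $(T_aT_b)^3(a) = a$ --- which you derive correctly from the braid relation via $(T_aT_b)^3 = (T_aT_bT_a)(T_bT_aT_b) = (T_aT_bT_a)^2$ together with Fact~\ref{f7} --- collapses all four rows and both signs of the exponent to the three curves $a$, $b$, and $T_a(b)$ (resp.\ $T_b(a)$). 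This buys two things the paper's argument does not deliver: it explains the mod-3 periodicity conceptually rather than discovering it inductively (the element $(T_aT_b)^3$ fixes the class $a$, consistent with its centrality in $\langle T_a,T_b\rangle$, which the paper only invokes later in Lemma~\ref{lemma3}), and it treats all sixteen entries uniformly, including the entries where two different-looking representatives occur, e.g.\ $T_bT_aT_b^{-1} = T_a^{-1}T_bT_a$, which you correctly reconcile by rearranging the braid relation. Conversely, the paper's induction stays entirely at the level of words in $T_a,T_b$ and needs no facts about curve images beyond the braid relation, but at the price of leaving three of the four rows unproved in print.
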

\begin{proof}
We only prove the first row of Table~\ref{table1}. The remaining rows can be shown similarly. We proceed by induction on $k$. 
\begin{eqnarray*}
\hspace{-1.7cm} k = 1: (T_a T_b) T_a (T_a T_b)^{-1} &=& T_a T_b T_a T_b^{-1} T_a^{-1}\\
                                                    &=& T_b T_a T_b T_b^{-1} T_a^{-1}\\  
                                                    &=& T_b
\end{eqnarray*}
\begin{eqnarray*}
\hspace{-1.5cm} k = 2: (T_a T_b)^2 T_a (T_a T_b)^{-2} &=& (T_a T_b) T_b (T_a T_b)^{-1}\\
                                                      &=& T_a T_b T_a^{-1}
\end{eqnarray*}
\begin{eqnarray*}
k = 3: (T_a T_b)^3 T_a (T_a T_b)^{-3} &=& (T_a T_b)(T_a T_b T_a^{-1})(T_a T_b)^{-1}\\
                                      &=& T_a T_b T_b^{-1} T_a T_b T_b^{-1} T_a^{-1}\\
                                      &=& T_a
\end{eqnarray*}
\noindent where the third equality is a consequence of the braid relation.

This takes care of the base case. Assume that the first row holds for some $k \geq 4$. Then
\begin{eqnarray*}
(T_a T_b)^{k+1} T_a (T_a T_b)^{-(k+1)} &=& (T_a T_b)(T_a T_b)^k T_a (T_a T_b)^{-k}(T_a T_b)^{-1}\\  
                                       &=& \left\{
   \begin{array}{lll}
     T_b & \mbox{if $k \equiv 0 \hspace{0.1cm} mod(3)$} \\
     T_a T_b T_a^{-1} & \mbox{if $k \equiv 1 \hspace{0.1cm} mod(3)$}\\
     T_a & \mbox{if $k \equiv 2 \hspace{0.1cm} mod(3)$}
   \end{array}
  \right. \\
\end{eqnarray*}
\end{proof}

\begin{prop}\label{lemma2}
Suppose that $a$ and $b$ are isotopy classes of simple closed curves in $S$ such that $i(a,b) = 1$. If $k \not \equiv 0 \hspace{0.1cm} mod(3)$, then $G = \langle T_a,T_b \rangle$. By Theorem~\ref{two Dehn twist subgroups}, this implies that $G \cong SL_2(\mathbb{Z})$ when $S = S_{1,0}$ and $G \cong \mathcal{B}_3$ when $S \neq S_{1,0}$.
\end{prop}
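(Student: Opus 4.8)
The plan is to establish the set equality by proving both inclusions. The inclusion $G \subseteq \langle T_a, T_b \rangle$ is immediate, since both generators of $G$, namely $X \in \{(T_aT_b)^k,(T_bT_a)^k\}$ and $Y = T_a$, are words in $T_a$ and $T_b$. The substance of the argument is the reverse inclusion $\langle T_a, T_b \rangle \subseteq G$. Because $T_a = Y$ already lies in $G$, this inclusion will follow the moment I can produce $T_b \in G$. Thus the entire proposition reduces to manufacturing the single element $T_b$ out of the two available generators $X$ and $T_a$.

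The engine for this is Lemma~\ref{lemma1}. I would first reduce to the case $k > 0$: since $\langle X, Y \rangle = \langle X^{-1}, Y \rangle$ and the hypothesis $k \not\equiv 0 \pmod 3$ is equivalent to $-k \not\equiv 0 \pmod 3$, replacing $X$ by $X^{-1}$ changes neither the group $G$ nor the hypothesis, so I may read conjugates directly off the table for a positive exponent. The crucial structural observation is that \emph{both} $XYX^{-1}$ and $X^{-1}YX$ lie in $G$, so for each form of $X$ I am free to select whichever of the two conjugation directions yields a usable output.

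I would then run through the cases. For $X = (T_aT_b)^k$: if $k \equiv 1 \pmod 3$, the first row of Table~\ref{table1} gives $XYX^{-1} = T_b$ outright; if $k \equiv 2 \pmod 3$, the second row gives $X^{-1}YX = T_b$ (equivalently, the first row yields $T_aT_bT_a^{-1}$, which I conjugate by $T_a^{-1} \in G$ to recover $T_b$). For $X = (T_bT_a)^k$: if $k \equiv 1 \pmod 3$, the fourth row gives $X^{-1}YX = T_b$; if $k \equiv 2 \pmod 3$, the third row gives $XYX^{-1} = T_b$. The one genuinely delicate point is precisely this selection of the conjugation direction: in the third row with $k \equiv 1$ the output is the conjugate $T_bT_aT_b^{-1}$, which I could not reduce to $T_b$ without already possessing $T_b$, so I must instead use $X^{-1}YX$ from the fourth row. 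Choosing the direction that lands on $T_b$ itself (or on a conjugate by a power of $T_a$, which $T_a \in G$ then undoes) is the step requiring care, though each individual verification is routine given Lemma~\ref{lemma1}.

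In every case with $k \not\equiv 0 \pmod 3$ this produces $T_b \in G$, giving $\langle T_a, T_b \rangle \subseteq G$ and hence $G = \langle T_a, T_b \rangle$. The isomorphism type is then read off from Theorem~\ref{two Dehn twist subgroups}: since $i(a,b) = 1$, the group $\langle T_a, T_b \rangle$ is isomorphic to $SL_2(\mathbb{Z})$ when $S = S_{1,0}$ and to $\mathcal{B}_3$ when $S \neq S_{1,0}$.
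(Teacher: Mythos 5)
Your proposal is correct and takes essentially the same approach as the paper: both reduce the statement to producing $T_b$ as an explicit word in $X$ and $Y = T_a$ (the inclusion $G \subseteq \langle T_a,T_b\rangle$ being trivial), and both read the necessary conjugates off the table in Lemma~\ref{lemma1}. The only difference is cosmetic: in the case $X = (T_bT_a)^k$, $k \equiv 1 \bmod 3$, the paper uses the word $YXYX^{-1}Y^{-1}$ (whose verification needs one extra application of the braid relation), whereas you use $X^{-1}YX$ from the fourth row of the table; both are valid.
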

\begin{proof}
Recall that $X \in \{(T_aT_b)^k,(T_bT_a)^k\}$, $k \in \mathbb{Z}$, and $Y = T_a$. Assume that $i(a,b) = 1$ and $k \not \equiv 0 \hspace{0.1cm} mod(3)$. The following table shows how to generate all of $\langle T_a,T_b \rangle$ from $X$ and $Y$. More precisely, the table indicates how to obtain $T_b$ from $X$ and $Y = T_a$. This implies that $G = \langle X,Y \rangle = \langle T_a,T_b \rangle$. For example, if $X = (T_aT_b)^k$, $Y = T_a$, $k > 0$, and $k \equiv 1 \hspace{0.1cm} mod(3)$, then $XYX^{-1} = T_b$ according to the table below. That $XYX^{-1} = T_b$ follows from Lemma~\ref{lemma1}. If, on the other hand, $X = (T_aT_b)^k$, $Y = T_a$, $k > 0$, and $k \equiv 2 \hspace{0.1cm} mod(3)$, then $Y^{-1}XYX^{-1}Y = T_b$ according to the table. To see why $Y^{-1}XYX^{-1}Y = T_b$, note that $XYX^{-1} = T_a T_b T_a^{-1}$ by Lemma~\ref{lemma1} and recall that $Y = T_a$. The remaining entries in the table below can be checked in a similar fashion.

\begin{table}[h!]
\begin{center}
 \begin{tabular}{|c|c|c|c|}
  \hline
$X$
 & 
$Y$ 
 & 
$k \equiv 1 \hspace{0.1cm} mod(3)$ 
 & 
$k \equiv 2 \hspace{0.1cm} mod(3)$ \\
  \hline 
$(T_aT_b)^k$ 
 &
$T_a$
 & 
$XYX^{-1}$
 & 
$Y^{-1}XYX^{-1}Y$ \\
  \hline
$(T_aT_b)^{-k}$
 &
$T_a$
 &
$Y^{-1}XYX^{-1}Y$
 &
$XYX^{-1}$ \\
  \hline
$(T_bT_a)^k$ 
 &
$T_a$
 & 
$YXYX^{-1}Y^{-1}$
 & 
$XYX^{-1}$ \\
  \hline
$(T_bT_a)^{-k}$ 
 &
$T_a$
 & 
$XYX^{-1}$
 & 
$YXYX^{-1}Y^{-1}$ \\
  \hline
 \end{tabular}
  \label{table2}
\end{center}
\end{table}  
   
\end{proof}

\begin{lem}\label{lemma3}
Consider $a$ and $b$ such that $i(a,b) =1$.
\begin{list}{\labelitemi}{\leftmargin=1em}
\item If $S \neq S_{1,0}$ and $k \equiv 0 \hspace{0.1cm} mod(3)$, then $G \cong \mathbb{Z}^2$.
\item If $S = S_{1,0}$ and $k \equiv 0 \hspace{0.1cm} mod(6)$, then $G = \langle T_a \rangle \cong \mathbb{Z}$.
\item If $S = S_{1,0}$ and $k \equiv 3 \hspace{0.1cm} mod(6)$, then $G \cong \mathbb{Z}_2 \times \mathbb{Z}$.
\end{list}
\end{lem}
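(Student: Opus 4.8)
The unifying observation is that all three cases lie under the hypothesis $k\equiv 0\pmod 3$, where the first row of Table~\ref{table1} in Lemma~\ref{lemma1} gives $(T_aT_b)^kT_a(T_aT_b)^{-k}=T_a$ (and the corresponding rows handle the other admissible $X$). Hence $X$ commutes with $Y=T_a$, so $G=\langle X,Y\rangle$ is abelian in every case, and the only work is to pin down its isomorphism type. The three sub-cases are separated by the 2-chain relation (Theorem~\ref{chain relation}): when $S\neq S_{1,0}$ the product $T_aT_b$ has infinite order, since $\langle T_a,T_b\rangle\cong\mathcal{B}_3$ is torsion-free by Theorem~\ref{two Dehn twist subgroups} and $T_aT_b\neq1$, whereas on the torus $(T_aT_b)^6=1$, so $T_aT_b$ has order $6$. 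I will also record at the outset that $(T_aT_b)^3$ is central in $\langle T_a,T_b\rangle$: writing $\Delta=T_aT_bT_a=T_bT_aT_b$ (the braid relation, Fact~\ref{f6}), one computes $(T_aT_b)^3=\Delta^2$, and since $\Delta T_a\Delta^{-1}=T_b$ and $\Delta T_b\Delta^{-1}=T_a$, the element $\Delta^2$ commutes with both $T_a$ and $T_b$; by Fact~\ref{f} this means $(T_aT_b)^3$ fixes both $a$ and $b$.

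For the first part ($S\neq S_{1,0}$), since $G$ is abelian and $2$-generated it suffices to show the homomorphism $\mathbb{Z}^2\to G$, $(m,n)\mapsto X^mY^n$, is injective. I would suppose $X^mY^n=(T_aT_b)^{km}T_a^{\,n}=1$, i.e. $(T_aT_b)^{km}=T_a^{-n}$, and apply both sides to $b$. Because $3\mid km$ and $(T_aT_b)^3(b)=b$, the left side fixes $b$, so $T_a^{-n}(b)=b$; Fact~\ref{f3} then gives $0=i(T_a^{-n}(b),b)=|n|\,i(a,b)^2=|n|$, whence $n=0$. This leaves $(T_aT_b)^{km}=1$, and since $T_aT_b$ has infinite order and $k\neq 0$ we conclude $m=0$. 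Thus the map is injective and $G\cong\mathbb{Z}^2$.

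For the torus, everything reduces to $T_aT_b$ having order $6$. If $k\equiv 0\pmod 6$ then $X=(T_aT_b)^k=1$, so $G=\langle Y\rangle=\langle T_a\rangle\cong\mathbb{Z}$ by Fact~\ref{f2}. If $k\equiv 3\pmod 6$ then $X=(T_aT_b)^3$, whose square is $(T_aT_b)^6=1$ and which is nontrivial (it maps to $-I$ under the isomorphism $\langle T_a,T_b\rangle\cong SL_2(\mathbb{Z})$ of Theorem~\ref{two Dehn twist subgroups}), so $X$ has order $2$. Since $X$ commutes with $Y=T_a$ and $\langle Y\rangle\cong\mathbb{Z}$ is torsion free, $\langle X\rangle\cap\langle Y\rangle=\{1\}$, and as $G$ is abelian this yields $G=\langle X\rangle\times\langle Y\rangle\cong\mathbb{Z}_2\times\mathbb{Z}$. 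The case $X=(T_bT_a)^k$ is handled identically using the relevant rows of Table~\ref{table1}.

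The main obstacle is the first part: proving that $X$ and $Y$ satisfy no nontrivial relation, so that $G$ is genuinely $\mathbb{Z}^2$ and not a proper quotient. The commutativity supplied by Lemma~\ref{lemma1} makes $G$ abelian for free, but ruling out common powers needs the extra input that $(T_aT_b)^3$ fixes $b$—so that the intersection-number obstruction of Fact~\ref{f3} can detect the exponent $n$—together with the infinite order of $T_aT_b$, which ultimately traces back to the nontrivial boundary twist $T_c$ in the 2-chain relation for $S\neq S_{1,0}$. The two torus cases are, by contrast, immediate once the order of $T_aT_b$ is known.
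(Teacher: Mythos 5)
Your proposal is correct, and its skeleton matches the paper's proof: make $G$ abelian, kill the $T_a$-exponent of a putative relation by acting on $b$ and invoking Fact~\ref{f} and Fact~\ref{f3}, kill the $X$-exponent via the infinite order of $T_aT_b$, and dispatch the two torus cases exactly as the paper does with Theorem~\ref{chain relation}. The differences lie in how you source the two auxiliary facts. Where the paper simply cites [\ref{KT}] for the statement that the center of $\langle T_a,T_b\rangle\cong\mathcal{B}_3$ is infinite cyclic generated by $(T_aT_b)^3$, you prove centrality by hand via $\Delta=T_aT_bT_a$, $(T_aT_b)^3=\Delta^2$, and Facts~\ref{f4}, \ref{f7}, \ref{f}; this is more self-contained and is a genuine, if small, improvement (the paper also derives commutativity of $X$ and $Y$ from that citation, whereas you get it for free from Lemma~\ref{lemma1}, which is equally valid). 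Conversely, for the step $(T_aT_b)^{km}=1\Rightarrow m=0$ you invoke torsion-freeness of $\mathcal{B}_3$ and attribute it to Theorem~\ref{two Dehn twist subgroups}; that theorem only supplies the isomorphism type, and torsion-freeness of braid groups is stated nowhere in the paper (the paper instead extracts the infinite order of $X$ from the [\ref{KT}] fact that the center is \emph{infinite cyclic}). This is not a mathematical gap, since braid groups are indeed torsion-free, but the attribution should be repaired: either cite that standard fact directly, or stay inside the paper with the argument you sketch in your closing paragraph, namely that if $(T_aT_b)^{km}=1$ with $km\neq 0$, then $T_c^{km}=(T_aT_b)^{6km}=1$ by Theorem~\ref{chain relation}, contradicting the infinite order of the boundary twist $T_c$, which is nontrivial precisely because $S\neq S_{1,0}$.
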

\begin{proof}
First assume that $i(a,b) = 1$ in $S \neq S_{1,0}$. By Theorem~\ref{two Dehn twist subgroups}, $\langle T_a,T_b \rangle \cong \mathcal{B}_3$. It is well known [\ref{KT}] that the center of $\langle T_a,T_b \rangle$ is infinite cyclic, generated by $(T_aT_b)^3$. Moreover, it is an immediate consequence of the braid relation that 
\[
   (T_aT_b)^3 = (T_bT_a)^3 \hspace{1.5cm} (\ast)
\]
\noindent So $(T_bT_a)^3$ generates the center of $\langle T_a,T_b \rangle$ as well. As such, $[X,Y] = 1$ for all $X \in \{(T_aT_b)^k,(T_bT_a)^k\}$, $k = 3n$ with $n \neq 0$, and $Y = T_a$. So every element in $G$ can be expressed in the form $X^{\alpha} Y^{\beta}$ for some $\alpha,\beta \in \mathbb{Z}$. For all nonzero $\beta$, note that $X^{\alpha} Y^{\beta}(b) \neq b$. To see this, observe that $X^{\alpha}$ is central in $\langle T_a,T_b \rangle$ and thus commutes with both $Y^{\beta}$ and $T_b$. As such, $X^{\alpha} Y^{\beta}(b) = Y^{\beta} X^{\alpha}(b) = Y^{\beta}(b)$ where the last equality is due to Fact~\ref{f}. Moreover, since $i(Y^{\beta}(b),b) = |\beta|i(a,b)^2 = |\beta| > 0$, $Y^({\beta}) \neq b$. $X^{\alpha} Y^{\beta}(b) \neq b$ combined with the fact that $\langle X \rangle$ is infinite cyclic imply that $G$ is torsion free. Consequently, $G \cong \mathbb{Z}^2$. Further, since $\langle T_a,T_b \rangle \cong \mathcal{B}_3$ is not virtually abelian (by Corollary~\ref{B3 is not va}), the abelian subgroup $G$ must have infinite index. 
 \medskip
 
Now assume that $i(a,b) = 1$ in the torus $S = S_{1,0}$. By Theorem~\ref{two Dehn twist subgroups}, $\langle T_a,T_b \rangle \cong SL_2(\mathbb{Z})$. It still follows from the braid relation that $(\ast)$ holds. It is easy to check that $(T_aT_b)^3$ is a nontrivial mapping class. Moreover, it follows from Theorem~\ref{chain relation} that $(T_aT_b)^6 = 1$. As such, $(T_aT_b)^3$ has order two. Thus, $X$ equals the identity when $k \equiv 0 \hspace{0.1cm} mod(6)$ and is an involution when $k \equiv 3 \hspace{0.1cm} mod(6)$. In the first case, it is immediate that $G = \langle Y \rangle \cong \mathbb{Z}$. In the second case, $X$ is an order two element which commutes with the infinite order element $Y$. Therefore, $G \cong \mathbb{Z}_2 \times \mathbb{Z}$. Finally, Corollary~\ref{modular group is not va} implies that $\langle T_a,T_b \rangle \cong SL_2(\mathbb{Z})$ is not virtually abelian. As such, the abelian subgroups $G$ that are isomorphic to $\mathbb{Z}$ or $\mathbb{Z}_2 \times \mathbb{Z}$ cannot have finite index. 
\end{proof}

\medskip

{\scriptsize
\noindent DEPARTMENT OF MATHEMATICS, COLLEGE OF COASTAL GEORGIA, BRUNSWICK, GA 31520, USA
 \smallskip
 
\noindent E-mail address: \textbf{jmortada@ccga.edu}}

\end{document}